\def\crn#1#2{{\vcenter{\vbox{
        \hbox{\kern#2pt \vrule width.#2pt height#1pt
           }
          \hrule height.#2pt}}}}
\newcommand{\stopthm}{\hfill$\square$\medskip}
\newcommand{\pa}{\partial}
\newcommand{\Ric}{\operatorname{Ric}}
\newcommand{\Vol}{\operatorname{Vol}}
\newcommand{\tr}{\operatorname{tr}}
\newcommand{\tf}{\operatorname{tf}}
\newcommand{\D}{\Delta}
\newcommand{\R}{\mathbb R}
\newcommand{\ep}{\epsilon}
\newcommand{\fe}{\varphi}
\newcommand{\al}{\alpha}
\newcommand{\ga}{\gamma}
\newcommand{\de}{\delta}
\newcommand{\si}{\sigma}
\newcommand{\be}{\beta}
\newcommand{\om}{\omega}
\newcommand{\Up}{\Upsilon}
\newcommand{\gbh}{\widehat{\overline{g}}}
\newcommand{\gb}{\overline{g}}
\newcommand{\nub}{\bar \nu}
\newcommand{\nab}{\overline{\nabla}}
\newcommand{\Jb}{\overline{J}}
\newcommand{\Lo}{\mathring{L}}
\newcommand{\Rb}{\overline{R}}
\newcommand{\Wb}{\overline{W}}
\newcommand{\Pb}{\overline{P}}
\newcommand{\Db}{\overline{\Delta}}
\newcommand{\Gb}{\overline{\Gamma}}
\newcommand{\Vt}{\widetilde{V}}
\newcommand{\Et}{\widetilde{E}}
\newcommand{\Om}{\Omega}
\newcommand{\Si}{\Sigma}
\newcommand{\cL}{\mathcal{L}}
\newcommand{\cF}{\mathcal{F}}
\newcommand{\cE}{\mathcal{E}}
\newcommand{\cI}{\mathcal I}
\newcommand{\cP}{\mathcal{P}}
\newcommand{\cB}{\mathcal{B}}
\theoremstyle{plain}
\newtheorem{theorem}{Theorem}[section]
\newtheorem{lemma}[theorem]{Lemma}
\newtheorem{proposition}[theorem]{Proposition}
\newtheorem{corollary}[theorem]{Corollary}
\theoremstyle{definition}
\theoremstyle{remark}
\newtheorem{remark}[theorem]{Remark}
\numberwithin{equation}{section}
\title[Chern-Gauss-Bonnet Formula]{Chern-Gauss-Bonnet Formula for Singular
  Yamabe Metrics in Dimension Four}
\author{C. Robin Graham}
\address{Department of Mathematics, University of Washington,
Box 354350\\
Seattle, WA 98195-4350}
\email{robin@math.washington.edu}
\author{Matthew J. Gursky}
\address{Department of Mathematics, University of Notre Dame,\\
Notre Dame, IN 46556}
\email{mgursky@nd.edu}
\begin{document}

\begin{abstract}
We derive a formula of Chern-Gauss-Bonnet type for the Euler characteristic
of a four dimensional manifold-with-boundary in terms of the geometry of
the Loewner-Nirenberg singular Yamabe metric in a prescribed conformal
class.  The formula involves the renormalized volume and a boundary
integral.  It is shown that if the boundary is 
umbilic, then the sum of the renormalized volume and the boundary integral
is a conformal invariant.  Analogous results are proved for asymptotically  
hyperbolic metrics in dimension four for which the second elementary  
symmetric function of the eigenvalues of the Schouten tensor is constant.
Extensions and generalizations of these results are discussed.  Finally, a 
general result is 
proved identifying the infinitesimal anomaly of the renormalized volume of
an asymptotically hyperbolic metric in terms of its renormalized volume 
coefficients, and used to outline alternate proofs of the conformal
invariance of the renormalized volume plus boundary integral.   

\end{abstract}

\maketitle

\thispagestyle{empty}

\renewcommand{\thefootnote}{}
\footnotetext{The second author acknowledges the support of NSF grants
  DMS-1811034 and DMS-1547292.}

\renewcommand{\thefootnote}{1}

\section{Introduction}\label{introduction}

In this paper we derive a Chern-Gauss-Bonnet formula for singular Yamabe
metrics in dimension 4, and also analyze related questions for metrics  
solving certain generalizations of the singular Yamabe problem.  In our 
context, a singular 
Yamabe metric means an asymptotically hyperbolic metric $g$ on the interior
of a smooth, compact, connected manifold-with-boundary $(M^{n+1},\pa M)$
with constant scalar curvature $R=-n(n+1)$.  Contained in the class of
singular Yamabe metrics are the Poincar\'e-Einstein metrics: those
asymptotically hyperbolic metrics satisfying $\Ric(g)=-ng$.  Clearly any
Poincar\'e-Einstein metric is a singular Yamabe metric (ignoring here the
issue of the boundary regularity which is assumed).

The basic result concerning singular Yamabe metrics
(\cite{LN}), \cite{AM}, \cite{M}, \cite{ACF}) is that for $n\geq 2$, given
any smooth metric $\gb$ on $M$, there exists a unique defining function $u$
for $\pa M$ so
that $g=u^{-2}\gb$ is a singular Yamabe metric.  The metric $g$ depends
only on the conformal class $[\gb]$ determined by $\gb$:  if $\gb$ is
replaced by $\Om^2\gb$ with $0<\Om\in C^\infty(M)$, then $u$ is replaced by
$\Om u$ so that $g$ is unchanged.  If $\rho$ is any defining function
for $\pa M$ (not necessarily $C^\infty$), the metric $\gb=\rho^2g$ is
called a compactification of $g$.  A smooth metric $\gb$ is in particular a
compactification of the singular Yamabe metric which it determines,
realized by taking $\rho = u$.  We will denote by $h=\gb|_{T\pa M}$ the
metric on $\pa M$ induced by $\gb$.

It follows from \cite{M}, \cite{ACF} that the defining function $u$
determined by a smooth metric $\gb$ has an asymptotic expansion of the form 
\begin{equation}\label{uexpand}
u=r+u^{(2)}r^2+\ldots +u^{(n+1)}r^{n+1}+\cL r^{n+2}\log r +O(r^{n+2})
\end{equation}
relative to the product identification of a collar neighborhood of $\pa M$ 
induced by $\gb$ (see the beginning of \S\ref{calcs}).  In particular $r$
is the $\gb$-distance to $\pa M$.  The coefficients $\cL$ and  
the indicated $u^{(j)}$ are smooth, locally determined functions on $\pa
M$.  

Volume renormalization for singular Yamabe metrics was considered in
\cite{G3}, \cite{GoW}, generalizing the discussion for Poincar\'e-Einstein
metrics in \cite{G1}.  We follow the formulation in \cite{G3}.  
As $\ep\to 0$, 
\begin{equation}\label{sypvolexp}
\operatorname{Vol}_g(\left\{r>\ep\right\})
= c_0\ep^{-n} +c_1\ep^{-n+1}+\cdots +c_{n-1}\ep^{-1}
+\cE\log \frac{1}{\ep}+V +o(1),
\end{equation}
where each of $\cE$ and the $c_j$'s is the integral over $\pa M$ of a local
invariant of the extrinsic geometry of $\pa M$ with respect to $\gb$.  The
log coefficient $\cE$ can be viewed as an energy of the submanifold
$\pa M$ of $(M,\gb)$ which is invariant under conformal
rescalings of $\gb$.  The renormalized volume $V=V(g,\gb)$ is a globally
determined quantity which in general
depends on $\gb$.  But its anomaly, i.e. its change under conformal 
rescaling $\gbh=e^{2\om}\gb$, $\om \in C^\infty(M)$, is locally determined;
it has the form
\begin{equation}\label{anomalydef}
V(g,\gbh)-V(g,\gb)=\int_{\pa M}\cP_{\gb}(\om)\,dv_h,
\end{equation}
where $\cP_{\gb}(\om)$ is a polynomial nonlinear differential operator
determined by the local geometry of $\pa M$ in the metric $\gb$.  If $g$ is 
Poincar\'e-Einstein and one restricts to geodesic
compactifications (meaning $\gb=r^2g$ with $|dr|_{\gb}=1$ near $\pa M$),
then $c_j=0$ for $j$ odd.  If in addition $n$ is odd, then also $\cE=0$ and
$V$ is conformally invariant (see \cite{G1}).

Recall that the Chern-Gauss-Bonnet formula for a compact Riemannian
4-manifold $(M,g)$ reads
\begin{equation}\label{cgb}
8\pi^2\chi(M)=\int_M\left(\tfrac14 |W|^2 +4\si_2(g^{-1}P)\right)dv_g,
\end{equation}
where $\chi(M)$ is the Euler characteristic, $W$ is the Weyl tensor,
$|W|^2=W_{\al\be\ga\de}W^{\al\be\ga\de}$, and $\si_2(g^{-1}P)$ denotes
the second elementary symmetric function of the eigenvalues of the
endomorphism $g^{\al\ga}P_{\ga\be}$.  Here $P$ is the Schouten
tensor, given by
\[
(n-1)P=\Ric(g)-Jg, \qquad\qquad J=\tr P =\frac{R}{2n} 
\]
for a metric in dimension $n+1$.  For $n=3$, we have
\begin{equation}\label{sig2}
4\si_2(g^{-1}P)=\tfrac{1}{24}R^2-\tfrac12 |E|^2,
\end{equation}
where $E=\Ric(g)-\frac{R}{4}g$ is the Einstein tensor.  In particular, if 
$g$ has $R=-12$, then \eqref{cgb} becomes
\begin{equation}\label{cgbcsc}
8\pi^2\chi(M)=\frac14\int_M|W|^2\,dv_g -\frac12\int_M|E|^2\,dv_g +6V
\end{equation}
with $V$ the volume of $(M,g)$.  In case $g$ is also Einstein, this reduces
to
\begin{equation}\label{cgbeinstein}
8\pi^2\chi(M)=\frac14\int_M|W|^2\,dv_g +6V.
\end{equation}
In \cite{A}, Anderson showed that
\eqref{cgbeinstein} holds also if $g$ is Poincar\'e-Einstein, where now $V$
is the renormalized volume of $(M,g)$.  Conformal invariance of the
integrand shows in this case that $\int_M|W|^2\,dv_g$ is convergent.

Our formula is an analogue of \eqref{cgbcsc} for singular Yamabe metrics,
but now a boundary integral appears.  We denote by $L$ the second
fundamental form for $\pa M$ relative to $\gb$ with respect to the inward
pointing unit normal
$\nub$: $L(X,Y)=\gb(\nab_X Y,\nub)$.  $\Lo$ denotes its trace-free part,
$H=\tr_h L$ the mean curvature, and $|L|^2$ and $|\Lo|^2$ the
norms with respect to the induced metric $h$.  Curvature expressions for
$\gb$ carry an
overline (for example the scalar curvature of $\gb$ is $\Rb$), while
curvature for $g$ is unadorned (scalar curvature of $g$ is $R$).  We use
Greek indices $\al$, $\be$ for $M$ ($0\leq \al,\be\leq 3$), Latin indices
$i$, $j$ for $\pa M$ ($1\leq i,j\leq 3$), and
a $0$ index for the inward unit normal, so that a Greek index $\al$
specializes either to a $0$ or an $i$.  Thus $\Rb_{00}$ is another notation
for $\Ric_{\gb}(\nub,\nub)$, and $\Wb_{0i0j}$ denotes the section of 
$S^2T^*\pa M$ obtained by contracting the Weyl tensor for $\gb$ twice into
$\nub$ and orthogonally projecting onto $T^*\pa M$ in the other two
indices.  Define $\cB_{\gb}\in C^\infty(\pa M)$ by
\begin{equation}\label{B}
24\cB_{\gb}=\pa_{\nub}\Rb+124\Lo^{ij}\Wb_{0i0j}
+108\tr(\Lo^3)+14H|\Lo|^2 +24\Lo^{ij}\Rb_{ij}
+6 H\Rb_{00} -\tfrac{10}{3} H\Rb  -\tfrac{16}{9} H^3.
\end{equation}

\begin{theorem}\label{main}
Let $g$ be a singular Yamabe metric in dimension 4 and
$\gb$ a smooth compactification of $g$.  Then
\begin{equation}\label{mainformula}
8\pi^2\chi(M)  = \frac14 \int_M|W|^2\,dv_g
- \frac12 \text{ fp} \int_{r>\ep}|E|^2\,dv_g +6V(g,\gb)
+\int_{\pa M}\cB_{\gb} \,dv_h.
\end{equation}
\end{theorem}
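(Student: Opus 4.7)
The plan is to apply the Chern--Gauss--Bonnet formula with boundary on the compact region $M_\ep=\{r\geq \ep\}$, equipped with the singular Yamabe metric $g$, and then take $\ep\to 0$.  Because $R_g=-n(n+1)=-12$, the identity \eqref{sig2} gives $4\si_2(g^{-1}P)=6-\tfrac12|E|^2$, so the interior integrand reads $\tfrac14|W|^2+6-\tfrac12|E|^2$.  In dimension four $|W|^2\,dv_g$ is pointwise conformally invariant and hence equal to $|W_{\gb}|^2\,dv_{\gb}$, which is smooth on all of $M$; the Weyl--squared integral therefore converges absolutely to $\tfrac14\int_M|W|^2\,dv_g$.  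The integral $6\Vol_g(M_\ep)$ expands by \eqref{sypvolexp}, contributing divergent pieces $6c_0\ep^{-3}+\cdots+6\cE\log(1/\ep)$ and the finite part $6V(g,\gb)$, while $\tfrac12\int_{r>\ep}|E|^2\,dv_g$ produces its own divergences together with the finite-part term appearing in \eqref{mainformula}.

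The main work is then the asymptotic analysis of the Chern--Gauss--Bonnet boundary transgression on $\{r=\ep\}$:
\[
8\pi^2\chi(M_\ep) = \int_{M_\ep}\!\left(\tfrac14|W|^2+6-\tfrac12|E|^2\right)dv_g + \oint_{\{r=\ep\}} T_g\,dA_g,
\]
where $T_g$ is the standard transgression polynomial in the second fundamental form of the level set $\{r=\ep\}$ and the restriction of the ambient Riemann tensor of $g$.  I would first rewrite $T_g$ in $\gb$-terms: under the conformal rescaling $g=u^{-2}\gb$, the second fundamental form, Schouten tensor and Weyl tensor of $g$ restricted to $\{r=\ep\}$ transform by known conformal laws, producing expressions in the corresponding $\gb$-quantities together with derivatives of $u$.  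Combined with the area-element identity $dA_g=u^{-3}dA_{\gb}|_{r=\ep}$, this reduces the boundary contribution to an integral over $\pa M$ of a universal local expression depending on $\gb$-curvature data and finitely many coefficients in the expansion of $u$.

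Substituting \eqref{uexpand} with its explicit coefficients $u^{(2)},u^{(3)},u^{(4)},\cL$ and expanding in $\ep$ produces a Laurent polynomial whose $\ep^k$ coefficients are universal local invariants of $\pa M\subset (M,\gb)$, built from $\Lo$, $H$, $\Rb_{ij}$, $\Rb_{00}$, $\Wb_{0i0j}$, $\pa_{\nub}\Rb$, and tangential covariant derivatives.  Since $\chi(M_\ep)=\chi(M)$ is independent of $\ep$, the divergent pieces (including the $\log\ep$ term tied to $\cE$) must cancel with those from the volume and $|E|^2$ expansions; extracting the $\ep^0$ term then yields \eqref{mainformula} and identifies $\cB_{\gb}$ as the residual boundary contribution, which should match \eqref{B}.

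The main obstacle is the algebraic bulk of this last step.  One needs the coefficients $u^{(j)}$ and $\cL$ of \eqref{uexpand} written out explicitly in terms of the extrinsic and intrinsic $\gb$-geometry of $\pa M$, and must then carefully combine many distinct curvature contractions into the precise numerical combination appearing in \eqref{B}.  The Weyl-tensor contribution to the transgression (in contrast to the pointwise conformally invariant $|W|^2\,dv_g$) and the appearance of $\pa_{\nub}\Rb$ from higher derivatives of $u$ both require particular care; on the other hand, the forced cancellation of the log divergence provides a nontrivial algebraic check on the bookkeeping, and specializing to the Einstein case $E=0$ recovers Anderson's formula \eqref{cgbeinstein} as a further consistency check.
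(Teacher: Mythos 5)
Your proposal is correct in outline but takes a genuinely different route from the paper. You apply the Chern--Gauss--Bonnet formula with boundary to the singular metric $g$ itself on $\{r\ge\ep\}$, so that the bulk integrand is already $\tfrac14|W|^2+6-\tfrac12|E|^2$ and all of the conformal bookkeeping is pushed into the transgression term of the degenerating metric, which you must then convert to $\gb$-quantities via the transformation laws for the second fundamental form, curvature, and area element of $\{r=\ep\}$. The paper instead applies Chern--Gauss--Bonnet to the smooth compactification $\gb$ on the same region: its boundary term \eqref{S} is the transgression of a smooth metric and converges to $\int_{\pa M}S\,dv_h$ with no expansion at all, while the conversion of the bulk integrand from $4\si_2(\gb^{-1}\Pb)$ to $4u^{-4}\si_2(g^{-1}P)$ plus a divergence is done once and for all by the exact pointwise identity of Lemma~\ref{s2eqn}; after integrating by parts, the only quantity requiring an $\ep$-expansion on the boundary is the explicit five-term flux $\cI$ of \eqref{Idef}. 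Both routes need the same essential input --- the expansion of $u$ through order $r^4$, one order beyond \cite{G3} --- and comparable final algebra to land on \eqref{B}, but the paper's organization isolates every divergence in a single explicit expression and keeps the transgression term entirely regular, whereas yours must expand the full boundary polynomial of $g$ on $\{r=\ep\}$, whose individual terms are not conformally covariant; this is more delicate to execute but not wrong. One bookkeeping remark: in either setup the $\log\ep$ cancellation occurs only between the volume expansion (the $6\cE$ term) and the $|E|^2$ expansion (the $-\tfrac12\cF$ term), since the $\cL r^{5}\log r$ term of \eqref{uexpand} enters the boundary integral only at order $\ep\log\ep$; so the ``log check'' you propose tests those two expansions rather than the boundary computation itself.
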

The integral $\int_M|E|^2\,dv_g$ typically diverges.  As above, $r$ denotes
the $\gb$-distance to $\pa M$, and as will explained in more detail in
\S\ref{calcs}, fp denotes the finite part of the integral:  this is
the constant term in the expansion of $\int_{r>\ep}|E|^2\,dv_g$ in
powers of $\ep^{-1}$ and $\log\ep$.  Typically each of the last three terms
on the right-hand side of \eqref{mainformula} depends on the choice of
compactification $\gb$.

Recall that a hypersurface is said to be umbilic with respect to a
background metric if $\Lo=0$.  This condition is invariant under conformal
rescalings
of the background metric.  We will say that $\pa M$ is umbilic for a
singular Yamabe metric $g$ if it is umbilic for any compactification $\gb$.
This is an important special case which includes all Poincar\'e-Einstein
metrics.
If $\pa M$ is umbilic, all the terms involving $\Lo$ drop out in
\eqref{B}, which therefore simplifies to
\begin{equation}\label{Bumbilic}
24\cB_{\gb}=\pa_{\nub}\Rb+6 H\Rb_{00} -\tfrac{10}{3} H\Rb-\tfrac{16}{9}
H^3.
\end{equation}
Recall from \cite{Es} that any Riemannian metric on a 4-dimensional 
manifold-with-boundary can be conformally rescaled to a Yamabe metric
having constant scalar curvature and for which $\pa M$ is minimal,
i.e. $H=0$.  Observe
that \eqref{Bumbilic} implies that $\cB_{\gb}=0$ if $\pa M$ is umbilic and
$\gb$ is chosen to be such a Yamabe representative in the conformal class.
It holds also that $\cB_{\gb}=0$ in case $\gb$ is a geodesic
compactification of a Poincar\'e-Einstein metric.  Then $H=0$ and
$\pa_{\nub}\Rb=0$, for instance by parity considerations.

We will see in \S\ref{calcs} that the following proposition follows from an
easy calculation
of the leading asymptotic term in the Einstein tensor.
\begin{proposition}\label{umbilicE}
Suppose $n\geq 2$.  If $g$ is a singular Yamabe metric with $\pa M$
umbilic, then $|E|_{\gb}\in L^\infty(M)$.
\end{proposition}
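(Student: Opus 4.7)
The plan is to write the Einstein tensor $E_g$ as a tensor smooth up to $\pa M$ plus $u^{-1}$ times the $\gb$-tracefree Hessian of $u$, and then to exhibit the umbilic condition as precisely the obstruction to the latter term being bounded in $\gb$-norm up to $\pa M$. Applying the conformal transformation law for the Schouten tensor under $g=u^{-2}\gb$ gives
\[
P_g=P_{\gb}+u^{-1}\nab^2 u-\tfrac{1}{2}u^{-2}|\nab u|_{\gb}^{2}\,\gb.
\]
The singular Yamabe equation $R_g=-n(n+1)$ can be rewritten as $\tfrac{1}{2}u^{-2}(1-|\nab u|_{\gb}^{2})=-\tfrac{1}{n+1}u^{-1}\Db u-\tfrac{1}{2n(n+1)}\Rb$; combining this with the identity $E=(n-1)\bigl(P-\tfrac{J}{n+1}g\bigr)$ collapses the algebra to
\[
E_g=E_{\gb}+(n-1)\,u^{-1}\!\left(\nab^2 u-\tfrac{1}{n+1}\Db u\,\gb\right).
\]
Since $E_{\gb}$ is smooth up to $\pa M$, boundedness of $|E|_{\gb}$ reduces to boundedness in $\gb$-norm of $u^{-1}$ times the $\gb$-tracefree Hessian of $u$.

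Next I would pass to the product collar $\gb=dr^2+h_r$ induced by $\gb$, expand $u=r+u^{(2)}r^2+O(r^3)$, and determine $u^{(2)}$. Substituting the expansion into the singular Yamabe equation and matching the $O(r)$ coefficient---using $\pa_r h_{ij}|_{0}=-2L_{ij}$, so that $\Gb^{0}_{ij}|_{0}=L_{ij}$---forces $u^{(2)}=-H/(2n)$. A short componentwise computation at $r=0$ then yields $\nab^{2}_{00}u=2u^{(2)}$, $\nab^{2}_{0i}u=0$, $\nab^{2}_{ij}u=-L_{ij}$, and $\Db u=-\tfrac{n+1}{n}H$; hence the $\gb$-tracefree Hessian of $u$ along $\pa M$ has vanishing normal components, while its tangential block equals $-L_{ij}+\tfrac{H}{n}h_{ij}=-\Lo_{ij}$. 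In particular, this tracefree Hessian vanishes identically along $\pa M$ precisely when $\Lo=0$.

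If $\pa M$ is umbilic, then $\nab^2 u-\tfrac{1}{n+1}\Db u\,\gb$ is a $(0,2)$-tensor smooth up to $\pa M$ that vanishes identically there; hence its components in any smooth coordinate frame are $O(r)=O(u)$, and $u^{-1}$ times the tensor is bounded in $\gb$-norm. Combined with smoothness of $E_{\gb}$, this gives $|E|_{\gb}\in L^{\infty}(M)$. The main obstacle is the bookkeeping to verify that the dimensional factors of $n$ and $n+1$ arising from the Schouten-to-Einstein conversion, the singular Yamabe equation, and the decomposition $L=\Lo+\tfrac{H}{n}h$ all conspire so that the leading boundary value of the $\gb$-tracefree Hessian of $u$ is \emph{exactly} $-\Lo$; this algebraic coincidence is what makes the umbilic hypothesis both necessary and sufficient for the pointwise bound on $|E|_{\gb}$.
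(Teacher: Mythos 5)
Your argument is correct, but it follows a different route from the one the paper uses to prove Proposition~\ref{umbilicE}. The paper's proof puts $g$ into asymptotically hyperbolic normal form $g=s^{-2}(ds^2+k_s)$ and computes the leading term of $\Et=\Ric(g)+ng$ in terms of $k'|_{s=0}$; the singular Yamabe condition forces $\tr_k k'=0$ and umbilicity forces $\tf_k k'=0$ at $s=0$, so $k'|_{s=0}=0$ and $\Et=O(1)$. That argument never needs the expansion of $u$, but it does require checking that the log terms in the normal-form diffeomorphism occur at high enough order not to interfere. You instead work entirely in the smooth collar of $\gb$: taking the $\gb$-trace-free part of the Schouten transformation law gives $E_g=E_{\gb}+(n-1)u^{-1}\bigl(\nab^2u-\tfrac{1}{n+1}\Db u\,\gb\bigr)$ (this is exactly the paper's \eqref{Eform}, and note the $|du|^2\gb$ term is killed by the trace-free projection alone, so your invocation of the Yamabe equation at that stage is harmless but unnecessary), and then the boundary values $u^{(2)}=-H/(2n)$, $u_{00}=-H/n$, $u_{ij}=-L_{ij}$ show that the trace-free Hessian restricts to $-\Lo$ on $\pa M$. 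This is precisely the ``alternate proof'' the paper alludes to after Proposition~\ref{umbilicE} and carries out in \S4 via \eqref{Eexpand}: your computation recovers the leading term $E_{ij}=-(n-1)\Lo_{ij}r^{-1}+O(1)$, which gives strictly more information than the normal-form argument (in particular that umbilicity is also \emph{necessary} for boundedness, and the coefficients entering \eqref{aF}). One small caveat: because of the $r^{n+2}\log r$ term in \eqref{uexpand}, $\nab^2u$ is not literally smooth up to $\pa M$ as you assert, only $C^{n-1}$; but since $n\ge 2$ this is at least $C^1$, which is all that is needed to conclude that a component vanishing at $r=0$ is $O(r)=O(u)$.
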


\noindent
Since $|E|^2_g\,dv_g = |E|^2_{\gb}\,dv_{\gb}$ when $n=3$,
Proposition~\ref{umbilicE} implies that $\int_M|E|^2_g\,dv_g<\infty$ if
$\pa M$ is umbilic and $n=3$.  So in this case $\text{ fp}
\int_{r>\ep}|E|^2\,dv_g = \int_M|E|^2\,dv_g$.  In particular,
$\text{ fp} \int_{r>\ep}|E|^2\,dv_g$ is independent of choice of $\gb$ in
the $n=3$ umbilic case.  In this case, Theorem~\ref{main} therefore
becomes:
\begin{theorem}\label{mainumbilic}
Let $g$ be a singular Yamabe metric in dimension 4 with $\pa M$ umbilic and
let $\gb$ be a smooth compactification of $g$.  Then
\begin{equation}\label{mainformulaumbilic}
8\pi^2\chi(M)  = \frac14 \int_M|W|^2\,dv_g
- \frac12 \int_M|E|^2\,dv_g +6V(g,\gb)
+\int_{\pa M}\cB_{\gb} \,dv_h.  
\end{equation}
\end{theorem}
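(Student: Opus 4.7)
The plan is to derive Theorem~\ref{mainumbilic} directly from Theorem~\ref{main} and Proposition~\ref{umbilicE}. The only feature of the right-hand side of \eqref{mainformula} that is not manifestly an ordinary convergent integral is the finite-part term $\text{fp}\int_{r>\ep}|E|^2\,dv_g$. Under the umbilic hypothesis in dimension four, I would show that $|E|^2\,dv_g$ is in fact integrable on $M$ and that its finite part coincides with the ordinary integral, so Theorem~\ref{main} reduces to \eqref{mainformulaumbilic} with the remaining three terms on the right-hand side left untouched.

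The first step is to invoke Proposition~\ref{umbilicE}, which gives $|E|_{\gb}\in L^\infty(M)$. Since $M$ is compact and $\gb$ is smooth, this yields $\int_M|E|^2_{\gb}\,dv_{\gb}<\infty$.

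The second step is a conformal-rescaling identity: for any trace-free symmetric $(0,2)$ tensor in dimension four, $|T|^2\,dv$ is conformally invariant. With $g=u^{-2}\gb$ and $n=3$, one checks that $|E|^2_g=u^4|E|^2_{\gb}$ and $dv_g=u^{-4}\,dv_{\gb}$, whose product gives $|E|^2_g\,dv_g=|E|^2_{\gb}\,dv_{\gb}$. The trace-free property needed here is automatic from the definition, since $\tr_g E=R-(R/4)\cdot 4=0$. Combined with the first step, this produces $\int_M|E|^2_g\,dv_g<\infty$.

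The final step is to pass to finite parts. Because $|E|^2_g$ is now an $L^1$ density on $M$, monotone convergence gives $\int_{r>\ep}|E|^2_g\,dv_g \to \int_M|E|^2_g\,dv_g$ as $\ep\to 0$. Matching this against the general singular Yamabe expansion of $\int_{r>\ep}|E|^2_g\,dv_g$ in powers of $\ep^{-1}$ and $\log(1/\ep)$ forces all polar coefficients and the log coefficient to vanish, and identifies the finite part with the ordinary integral. Substituting $\text{fp}\int_{r>\ep}|E|^2\,dv_g=\int_M|E|^2\,dv_g$ into \eqref{mainformula} produces \eqref{mainformulaumbilic}. There is no real obstacle here: the argument is a short conformal computation plus a quotation of Proposition~\ref{umbilicE}, with the substantive work (the asymptotic computation of $E$ near $\pa M$ in the umbilic case) deferred to the proof of that proposition in \S\ref{calcs}.
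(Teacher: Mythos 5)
Your proposal is correct and follows exactly the paper's route: the paper likewise deduces Theorem~\ref{mainumbilic} from Theorem~\ref{main} by combining Proposition~\ref{umbilicE} with the identity $|E|^2_g\,dv_g=|E|^2_{\gb}\,dv_{\gb}$ (valid for $n=3$ since $g^{-1}=u^2\gb^{-1}$ and $dv_g=u^{-4}dv_{\gb}$), concluding that $\int_M|E|^2\,dv_g<\infty$ and hence that the finite part equals the ordinary integral. The only cosmetic remark is that trace-freeness of $E$ plays no role in that identity, which holds for any fixed $(0,2)$-tensor by the weight count alone; everything else matches the paper.
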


Let $g$ be a singular Yamabe metric with $n=3$ and $\pa M$ umbilic and let
$\gb$ be a compactification of $g$.  Set
\[
\Vt(g) = V(g,\gb)+\frac16 \int_{\pa M}\cB_{\gb} \,dv_h.
\]
The notation is justified by:
\begin{corollary}\label{Vtinvariance}
If $\pa M$ is umbilic, then $\Vt(g)$ is conformally invariant, i.e. it
is independent of the choice of compactification $\gb$.
\end{corollary}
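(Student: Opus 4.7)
The plan is to read off the corollary directly from Theorem~\ref{mainumbilic} by rearranging terms and observing which quantities on the right-hand side are intrinsic to $g$ (or to $M$ itself).

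First I would rewrite the umbilic Chern-Gauss-Bonnet formula \eqref{mainformulaumbilic} by isolating the renormalized-volume and boundary-integral terms:
\begin{equation*}
6V(g,\gb)+\int_{\pa M}\cB_{\gb}\,dv_h
= 8\pi^2\chi(M)-\frac14\int_M|W|^2\,dv_g+\frac12\int_M|E|^2\,dv_g.
\end{equation*}
Dividing by $6$ gives $\Vt(g)$ on the left, so it suffices to show that every term on the right is independent of the choice of compactification $\gb$.

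Next I would check each term in turn. The Euler characteristic $\chi(M)$ is a topological invariant of $M$. The singular Yamabe metric $g$ depends only on the conformal class $[\gb]$ by the uniqueness statement in the introduction, so $g$ itself is unchanged when $\gb$ is replaced by $\Om^2\gb$; hence the integrals $\int_M|W|^2_g\,dv_g$ and $\int_M|E|^2_g\,dv_g$, which are computed intrinsically from $g$, do not see $\gb$ at all. The only subtlety is whether $\int_M|E|^2\,dv_g$ is actually finite and well defined, but this was already addressed in the excerpt: Proposition~\ref{umbilicE} together with the pointwise identity $|E|^2_g\,dv_g=|E|^2_{\gb}\,dv_{\gb}$ in dimension $n+1=4$ shows that in the $n=3$ umbilic case the integral converges and equals $\text{fp}\int_{r>\ep}|E|^2\,dv_g$.

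The only step that could count as an obstacle is the one that has already been done, namely verifying that the apparently $\gb$-dependent quantity $\text{fp}\int_{r>\ep}|E|^2\,dv_g$ is in fact $\gb$-independent; this is precisely the content of Proposition~\ref{umbilicE} in the umbilic case. With that in hand the corollary follows with no further work: the right-hand side of the displayed equation above is manifestly an invariant of $(M,[\gb])$, so $\Vt(g)$ is also.
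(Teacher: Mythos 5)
Your proposal is correct and is essentially identical to the paper's own argument: the authors also deduce the corollary immediately from Theorem~\ref{mainumbilic} by noting that $\chi(M)$, $\int_M|W|^2\,dv_g$, and $\int_M|E|^2\,dv_g$ are all independent of the choice of compactification, with the convergence of the $|E|^2$ integral in the umbilic case supplied by Proposition~\ref{umbilicE}. No gaps.
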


\noindent
Corollary~\ref{Vtinvariance} is an immediate consequence of
Theorem~\ref{mainumbilic} since $\chi(M)$,
$\int_M|W|^2\,dv_g$, and $\int_M|E|^2\,dv_g$ are all independent of the
choice of compactification.  Note that \eqref{mainformulaumbilic} can be
written  
\begin{equation}\label{rewrite}
8\pi^2\chi(M)  = \frac14 \int_M|W|^2\,dv_g
- \frac12 \int_M|E|^2\,dv_g +6\Vt(g),
\end{equation}
in which each term on the right-hand side is an invariant of $g$, i.e. is
independent of choice of $\gb$.  By the observation above,
$\Vt(g)=V(g,\gb)$ if the representative $\gb$ is chosen to be a Yamabe
representative having constant scalar curvature and $H=0$.  Thus choosing
such a Yamabe representative can be regarded as a sort of ``conformal gauge 
fixing'' for $\Vt(g)$.    

In the general, not necessarily umbilic, case, Theorem~\ref{main} implies
instead that  
\begin{equation}\label{combination}
6V(g,\gb) - \frac12 \text{ fp} \int_{r>\ep}|E|^2\,dv_g
+\int_{\pa M}\cB_{\gb} \,dv_h 
\end{equation}
is conformally invariant.  A direct proof of this is outlined at the end of
\S\ref{anomalies}.  

Equation~\eqref{rewrite} has the following consequence.
\begin{proposition}\label{PEineq}
Let $g$ be a singular Yamabe metric with $\pa M$ umbilic.  Then
\begin{equation}\label{ineq}
8\pi^2\chi(M)  \leq \frac14 \int_M|W|^2\,dv_g +6\Vt(g)
\end{equation}
with equality if and only if $g$ is Poincar\'e-Einstein.  In the case of
equality, $\Vt(g)$ agrees with the usual renormalized volume of $g$
as a Poincar\'e-Einstein metric, and \eqref{ineq} reduces to Anderson's
formula \eqref{cgbeinstein}.
\end{proposition}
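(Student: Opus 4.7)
The plan is to deduce the proposition directly from the identity \eqref{rewrite} of Theorem~\ref{mainumbilic}. Since $|E|^2 \geq 0$ pointwise and $\int_M|E|^2\,dv_g$ is finite by Proposition~\ref{umbilicE} together with the observation that $|E|^2_g\,dv_g=|E|^2_{\gb}\,dv_{\gb}$ in dimension 4, the inequality \eqref{ineq} follows immediately by dropping the nonpositive term $-\tfrac12\int_M|E|^2\,dv_g$ from the right-hand side of \eqref{rewrite}.

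Equality in \eqref{ineq} holds if and only if $E\equiv 0$ on $M$, i.e., $\Ric(g)=\tfrac{R}{4}g$. Since $g$ is a singular Yamabe metric in dimension $4$, we have $R=-12$, so $E=0$ forces $\Ric(g)=-3g$. Together with the assumed asymptotically hyperbolic behavior, this says precisely that $g$ is Poincar\'e-Einstein.

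It remains to identify $\Vt(g)$ with the standard renormalized volume in the Poincar\'e-Einstein case. By Corollary~\ref{Vtinvariance}, $\Vt(g)$ does not depend on the choice of compactification, so we may compute it using any convenient $\gb$. Choose $\gb$ to be a geodesic compactification of $(M,g)$; then, as noted after \eqref{Bumbilic}, $H=0$ and $\pa_{\nub}\Rb=0$ on $\pa M$, so $\cB_{\gb}\equiv 0$. Hence $\Vt(g)=V(g,\gb)$, which is exactly the renormalized volume of the Poincar\'e-Einstein metric $g$ computed with respect to a geodesic defining function (and this is itself conformally invariant since the boundary dimension $n=3$ is odd, per the discussion following \eqref{anomalydef}). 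Substituting $|E|^2\equiv 0$ and this identification of $\Vt(g)$ into \eqref{rewrite} yields
\[
8\pi^2\chi(M)=\tfrac14\int_M|W|^2\,dv_g+6V,
\]
which is Anderson's formula \eqref{cgbeinstein}.

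The only nontrivial point is the bookkeeping in the equality case: one has to know both that the singular Yamabe value $\Vt(g)$ coincides with the Poincar\'e-Einstein renormalized volume, and that the inputs of Theorem~\ref{mainumbilic} apply. The former is handled cleanly by invoking conformal invariance of $\Vt(g)$ to reduce to a geodesic compactification (where $\cB_{\gb}=0$); the latter is immediate once $g$ is Einstein with $R=-12$, since such a metric is automatically umbilic on $\pa M$ in any smooth compactification, so Theorem~\ref{mainumbilic} applies throughout.
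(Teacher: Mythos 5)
Your proposal is correct and follows exactly the route the paper intends: the proposition is stated as an immediate consequence of \eqref{rewrite}, obtained by dropping the nonpositive term $-\tfrac12\int_M|E|^2\,dv_g$, with equality forcing $E\equiv 0$ and hence $\Ric(g)=-3g$. Your identification of $\Vt(g)$ with the Poincar\'e-Einstein renormalized volume via Corollary~\ref{Vtinvariance} and a geodesic compactification (where $\cB_{\gb}=0$, as the paper notes after \eqref{Bumbilic}) is the natural completion of the argument the paper leaves implicit.
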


\noindent
Proposition~\ref{PEineq} suggests the following variational approach to the
existence problem for Poincar\'e-Einstein metrics with prescribed
conformal infinity in dimension 4.  Given $(M,\pa M)$ and a conformal
class $[h]$ on $\pa M$, let $[\gb]$ be a conformal class extending
$[h]$ to $M$ for which $\pa M$ is umbilic and let $g$ be the associated
singular Yamabe metric.  Consider the following minimization problem:
\[
\Phi:=\inf_{[\gb]}\left( \frac14 \int_M|W|^2\,dv_g +6\Vt(g)\right) 
\]
Proposition~\ref{PEineq} implies that $\Phi \geq 8\pi^2\chi(M)$.  It also
implies that if $\Phi>8\pi^2\chi(M)$, then there does not exist a
Poincar\'e-Einstein metric
on $M$ with conformal infinity $[h]$.  If $\Phi = 8\pi^2\chi(M)$ and the
infimum is attained, then any minimizer is a Poincar\'e-Einstein metric
having conformal infinity $[h]$.  It is tempting to view this as a sort of
Dirichlet Principle for the Poincar\'e-Einstein problem.  But using it
seems to be problematic.  For starters, one must first solve for the
singular Yamabe metric to evaluate the ``energy''
$\frac14 \int_M|W|^2\,dv_g +6\Vt(g)$.  Moreover, this energy includes the
nonlocal, renormalized contribution $\Vt(g)$ which is difficult to analyze.
Finally, one must compare the infimum to $8\pi^2\chi(M)$ in order to deduce
any conclusions.

%
%

The formula \eqref{cgb} suggests that in the context
of the Chern-Gauss-Bonnet Theorem in dimension 4, it is natural to
consider metrics for which $\sigma_2(g^{-1}P)$ is constant.  This motivates
consideration here of the $\sigma_k$-Yamabe problem introduced in
\cite{V1}:  given a closed manifold $(M,\gb)$ of dimension at least three,
find a conformal metric $g = u^{-2}\gb$ satisfying  
\begin{equation} \label{skYi}
\sigma_k( g^{-1} P_g ) = const.
\end{equation}
If $k=1$ this reduces to the Yamabe problem, while if $k \geq 2$
(\ref{skYi}) is fully non-linear (as an equation for the conformal factor
$u$), and one must impose a condition to guarantee ellipticity.   To this
end, a metric $g$ is said to be {\em $k$-admissible}  
(or, if the context is clear, simply {\em admissible}) if $\sigma_j(g^{-1}
P_g) > 0$ for all $1 \leq j \leq k$.    If $\gb$ is $k$-admissible and the
constant on the right-hand side is positive, then (\ref{skYi}) is elliptic
at any solution (see Proposition 2 of \cite{V2}).  Equation (\ref{skYi}) is
also elliptic 
if $\sigma_j(- \gb^{-1} \Pb) > 0$ for all $1 \leq j \leq k$, in which case
$\gb$ is said to be {\em negative $k$-admissible}.  
For admissible metrics the existence theory for (\ref{skYi}) is well
developed; see \cite{V3}, \cite{STW2} for surveys.  In contrast to the
Yamabe problem, existence for classical solutions in the negative
admissible case is not fully understood, due 
to the lack of interior $C^2$-estimates for solutions (see Section 3.3 of
\cite{STW1} for a discussion). 

In \cite{MP}, Mazzeo-Pacard considered a singular version of the
$\sigma_k$-Yamabe problem in connection with the existence question for
Poincar\'e-Einstein metrics: given a compact manifold-with-boundary  
$(M, \partial M,\gb)$ of dimension $n+1$, construct an asymptotically  
hyperbolic metric $g = u^{-2}\gb$ solving (\ref{skYi}), where the constant 
is the value on hyperbolic space, namely $(-2)^{-k}\binom{n+1}{k}$.  
(A continuity argument shows that $g$ is automatically negative
$k$-admissible, since $g$ is asymptotically hyperbolic 
and satisfies (\ref{skYi}).)  Mazzeo-Pacard showed that   
the perturbation problem is never obstructed, so that given a solution $g =
u^{-2}\gb$ of the singular $\sigma_k$-Yamabe problem, every conformal
class sufficiently close to $[ \gb ]$ also admits a solution.  The
connection to Poincar\'e-Einstein metrics follows from the simple
observation that an asymptotically hyperbolic metric is Poincar\'e-Einstein
if and only if it solves the singular $\sigma_k$-Yamabe problem for all
$k$.  In particular, given a compactification $\rho^2 g_{+}$ of a
Poincar\'e-Einstein metric, it follows from the Mazzeo-Pacard result that
every conformal class $[\gb]$ near $[\rho^2 g_{+}]$ 
admits metrics $g_k=u_k^{-2}\gb$, $1\leq k\leq n+1$, where $g_k$ is a
solution of the singular $\sigma_k$-Yamabe problem.   

Although the result of Mazzeo-Pacard gives {\em local} existence -- i.e.,
existence of solutions in conformal classes near a given solution, the same 
issues arise as in the closed case when attempting to solve the singular
$\sigma_k$-Yamabe problem in general.  
In fact, in \cite{GSW}, Gursky-Streets-Warren gave an example of a
conformal manifold-with-boundary that does not admit a solution to the
singular $\sigma_k$-Yamabe problem for $k = n+1$ (see Proposition 6.3 in 
\cite{GSW}).  The obstruction is easy to explain: let $(M^{n+1}, \partial
M, \gb)$ be a locally conformally flat manifold-with-boundary, and suppose
$g = u^{-2} \gb$ is a solution of the singular $\sigma_k$-Yamabe problem
with $k = n+1$.  The continuity argument mentioned in the previous
paragraph shows that the Schouten tensor of $g$ is everywhere negative
definite.  Since $g$ is locally conformally flat, the curvature tensor of
$g$ is given by  
 \[
 R_{ij k \ell} = g_{ik} P_{j \ell} - g_{i \ell} P_{jk} - g_{ jk } P_{ i \ell} + g_{j \ell} P_{ik }.
 \]
If $P_g$ is negative definite, it is easy to check that $g$ has negative 
sectional curvature.  By the Cartan-Hadamard Theorem the universal cover
of $\mathring{M}^{n+1}$, the interior of $M^{n+1}$, is diffeomorphic to
$\mathbb{R}^{n+1}$.  However, it is easy to give examples where this not
the case: take $M^{n+1} = S^n \times [0,1]$, with $\gb$ the product
metric.  

Interestingly, recent work of Gonzalez-Li-Nguyen \cite{GLN} establishes the 
existence of a unique, Lipschitz continuous {\em viscosity}  
solution of the singular $\sigma_k$-Yamabe problem for domains in Euclidean
space.  Although this generalizes the classical Loewner-Nirenberg result, 
the example of Gursky-Streets-Warren illustrates that viscosity solutions
need not be classical (i.e., $C^2$) solutions. 

For our considerations here, which are based on formal asymptotics, we will 
simply assume that we have a smooth metric $\gb$ on $M$ and a defining
function $u\in C^{\infty}(\mathring{M})$ which has a polyhomogeneous
expansion at the boundary, such that $g=u^{-2}\gb$ satisfies   
\begin{equation}\label{sigmakequation}
\sigma_k(-g^{-1}P_g)=2^{-k}\binom{n+1}{k}.
\end{equation}
Henceforth, this is what we will mean by a solution of the singular
$\sigma_k$-Yamabe problem.  

The form of the expansion of $u$ at the boundary is determined by the
indicial roots of \eqref{sigmakequation},
viewed as an equation for $u$.  The indicial roots were 
calculated in \cite{MP}, but that derivation contains an error.  
As we discuss in \S\ref{calcs2}, the indicial roots are $0$ and $n+2$, and
in particular are independent of $k$.  Thus for any $k$, the expansion of
$u$ is of the form \eqref{uexpand}, where the coefficients $\cL$ and the 
$u^{(j)}$, $2\leq j\leq n+1$, are locally determined, and, of course, depend
on $k$.  Arguing exactly as in \cite{G3} (or see \cite{GoW}), it 
follows that $\Vol_g(\{r>\ep\})$ has an asymptotic  
expansion of the same form \eqref{sypvolexp}, where again each of $\cE$ and
the $c_j$'s is the integral over $\pa M$ of a local invariant of the
extrinsic geometry induced by $\gb$, which depends on $k$.  The constant
term $V=V(g,\gb)$ is the renormalized volume for $g$.  We denote by
$\cL^{\sigma_k}$, $\cE^{\sigma_k}$ the coefficients of the log terms in the
expansions \eqref{uexpand} and \eqref{sypvolexp} for a solution of the
singular $\sigma_k$-Yamabe problem.  The coefficients 
$u^{(j)}$, $\cL^{\sigma_k}$, $\cE^{\sigma_k}$ and $c_j$ depend only on
formal calculations, so are well-defined in terms of $\gb$ independently of  
existence of actual solutions $u$.   The same arguments in \cite{G3},
\cite{GoW} show that the log coefficients  
$\cL^{\sigma_k}$ and $\cE^{\sigma_k}$ are conformally invariant:  
under conformal change $\gbh=\Om^2\gb$,
one has $\widehat{\cE^{\sigma_k}}=\cE^{\sigma_k}$ and  
$\widehat{\cL^{\sigma_k}}=\big(\Om|_{\Si}\big)^{-n-1}\cL^{\sigma_k}$.  

We prove an analogue of Theorem~\ref{mainumbilic} for
solutions of the singular $\sigma_2$-Yamabe problem.  There are two major    
simplifications as compared with the case $k=1$:  the term involving the
Einstein tensor $E$ does not 
appear, and the general version of the formula and the conformal invariance
of $\Vt$ hold without the assumption of umbilicity.  The boundary term
$\cB^{\sigma_2}_{\gb}$ which enters is given by:     
\begin{equation}\label{B2}
24\cB^{\sigma_2}_{\gb}=  \pa_{\nub}\Rb+ 52 \Lo^{ij}\Wb_{0i0j}
+ 36 \tr(\Lo^3)+ \tfrac{50}{3} H|\Lo|^2 +24 \Lo^{ij}\Rb_{ij}
+ 6 H\Rb_{00} -\tfrac{10}{3} H\Rb  -\tfrac{16}{9} H^3. 
\end{equation}

\begin{theorem}\label{main2}
Let $g = u^{-2}\gb$ be a solution of the singular $\sigma_2$-Yamabe problem  
in dimension 4.  Then in the notation of Theorem~\ref{main},
\begin{equation}\label{mainformula2}
8\pi^2\chi(M)  = \frac14 \int_M|W|^2\,dv_g +6\Vt^{\sigma_2}(g),
\end{equation}
where 
\begin{equation} \label{V2}
\Vt^{\sigma_2}(g) = V(g,\gb)+\frac16 \int_{\pa M}\cB^{\sigma_2}_{\gb}
\,dv_h. 
\end{equation}
Moreover, $\Vt^{\sigma_2}(g)$ is conformally invariant, i.e. it 
is independent of the choice of compactification $\gb$.
\end{theorem}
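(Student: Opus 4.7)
The plan is to mimic the proof of Theorem~\ref{main}, applying the Chern-Gauss-Bonnet formula for compact $4$-manifolds-with-boundary to the truncated regions $\{r \geq \ep\}$ equipped with the metric $g$, and extracting the finite part as $\ep \to 0$. Two features simplify matters compared with Theorem~\ref{main}: in dimension $4$, $\sigma_2$ is a quadratic polynomial so $\sigma_2(-X)=\sigma_2(X)$, whence \eqref{sigmakequation} with $k=2$, $n=3$ gives $4\sigma_2(g^{-1}P_g) = 2^{-k+2}\binom{4}{2} = 6$, a constant; and in dimension $4$ the density $|W|^2_g\,dv_g = |W|^2_{\gb}\,dv_{\gb}$ is pointwise conformally invariant, so $\int_M|W|^2\,dv_g$ is already convergent.

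Applying the boundary Chern-Gauss-Bonnet formula to $(\{r \geq \ep\}, g)$, which has Euler characteristic $\chi(M)$, and using $4\sigma_2(g^{-1}P_g)=6$, yields
\begin{equation*}
8\pi^2\chi(M) = \tfrac14\int_{\{r>\ep\}}|W|^2\,dv_g + 6\Vol_g(\{r>\ep\}) + \cT(\ep),
\end{equation*}
where $\cT(\ep)$ is the integral over $\{r=\ep\}$ of the standard boundary Gauss-Bonnet integrand (a universal polynomial in the intrinsic curvature and the second fundamental form of $\{r=\ep\}$ in $(M,g)$). I would then expand each term as $\ep\to 0$: the Weyl part converges to $\tfrac14\int_M|W|^2\,dv_g$; the volume has the expansion \eqref{sypvolexp}, with the $\sigma_2$-Yamabe coefficients $c_j$, $\cE^{\sigma_2}$, and finite part $V(g,\gb)$; and to expand $\cT(\ep)$ one uses the product collar identification, writes $g = u^{-2}(dr^2 + h_r)$, and computes the extrinsic geometry of $\{r=\ep\}$ and the intrinsic curvature of the induced metric in terms of $u$, $h_r$, and their derivatives. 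Substituting the expansion \eqref{uexpand}, with coefficients now determined by formally solving \eqref{sigmakequation} with $k=2$, produces an expansion of $\cT(\ep)$ in powers of $\ep^{-1}$ and $\log\ep$. Since the left-hand side is finite and the Weyl term converges, the divergent parts of $\cT(\ep)$ must exactly cancel those of $6\Vol_g(\{r>\ep\})$, and the finite parts combine to yield \eqref{mainformula2}, with $\cB^{\sigma_2}_{\gb}$ identified as the finite part of the boundary integrand. Matching this general expression to the explicit formula \eqref{B2} is then a calculation.

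The conformal invariance of $\Vt^{\sigma_2}(g)$ is then immediate: rearranging \eqref{mainformula2},
\[
6\Vt^{\sigma_2}(g) = 8\pi^2\chi(M) - \tfrac14\int_M|W|^2\,dv_g,
\]
and each term on the right is manifestly independent of the compactification $\gb$.

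The main obstacle is the explicit computation of $\cT(\ep)$ to the order needed to read off \eqref{B2}. This requires first determining $u^{(2)}$, $u^{(3)}$, $u^{(4)}$ and the log coefficient $\cL^{\sigma_2}$ by formally solving the $\sigma_2$-Yamabe equation, and then tracking these coefficients through the boundary Gauss-Bonnet integrand, expressed in terms of the curvatures of $g$ and the extrinsic geometry of $\{r=\ep\}$ pulled back from $\gb$. The structure parallels that for Theorem~\ref{main}, but the different equation for $u$ produces different numerical coefficients and, crucially, eliminates the $|E|^2$ contribution: this is why no umbilicity hypothesis is required and why no finite-part integral of $|E|^2$ appears on the right-hand side.
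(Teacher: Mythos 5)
Your proposal is correct in outline and would prove the theorem, but it organizes the boundary analysis differently from the paper. You apply the Chern--Gauss--Bonnet formula to the truncated region $\{r\geq\ep\}$ equipped with the \emph{singular} metric $g$, using $4\sigma_2(g^{-1}P_g)=6$ to turn the interior integrand into $\tfrac14|W|^2_g\,dv_g+6\,dv_g$ directly, and then you must expand the full Gauss--Bonnet boundary integrand of $(\{r=\ep\},g)$ --- i.e.\ the extrinsic curvature and ambient curvature of $g$ at $r=\ep$, conformally transformed from $\gb$ --- down to its finite part. The paper instead applies Chern--Gauss--Bonnet to $\{r\geq\ep\}$ with the \emph{smooth} metric $\gb$, so that its Gauss--Bonnet boundary term $S$ converges without any renormalization, and converts the interior term $4\sigma_2(\gb^{-1}\Pb)\,dv_{\gb}$ into $6\,dv_g$ plus an exact divergence via Lemma~\ref{s2eqn}; integrating that divergence by parts isolates all of the singular behavior in the comparatively simple five-term integrand $\cI$ of \eqref{Idef}, whose expansion through order $\ep^0$ is then computed from the expansion of $u$. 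Both routes require the same essential input --- the coefficients $f_0,f_1,f_2$ obtained by formally solving the $\sigma_2$-Yamabe equation through order $r^4$ (the paper does this via the rewriting \eqref{rewrite2}, itself a consequence of Lemma~\ref{s2eqn}) --- and both yield $\cE^{\sigma_2}=0$ as a by-product of the absence of a $\log\ep$ term in the boundary expansion; but the paper's divergence-identity device keeps the singular computation substantially lighter than a direct expansion of the conformally rescaled second fundamental form and curvature of $g$ on $\{r=\ep\}$. Your derivation of the conformal invariance of $\Vt^{\sigma_2}(g)$ by rearranging \eqref{mainformula2} is exactly the paper's argument.
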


Observe that when the boundary is umbilic with respect to $\gb$, then
$\cB^{\sigma_2}_{\gb} = \cB_{\gb}$.  An immediate consequence of this fact
is 

\begin{corollary} \label{NewtonCor} Let $(M,\partial M)$ be a compact
  four-dimensional manifold-with-boundary. 
 Suppose $g_1$ and $g_2$ are solutions of the singular $\sigma_k$-Yamabe
 problem in the same conformal class, for $k = 1$ and $k =2$ respectively.
 Let $g_1 = u_1^{-2} \gb$ and $g_2 = u_2^{-2} \gb$, where $\gb$ is a smooth
 compactification.   If $\partial M$ is umbilic with respect to $\gb$, then 
\begin{equation} \label{Newton}
V(g_2,\gb) \leq V(g_1,\gb), 
\end{equation}
and equality holds if and only if $g_1 = g_2$ is a Poincar\'e-Einstein metric.
\end{corollary}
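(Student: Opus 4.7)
The plan is to subtract the two Chern--Gauss--Bonnet formulas already established. Since umbilicity is a conformally invariant condition, $\pa M$ is umbilic with respect to every compactification, so Theorem~\ref{mainumbilic} applies to $g_1$ and gives
\[
8\pi^2\chi(M) = \tfrac14 \int_M|W|^2\,dv_{g_1} - \tfrac12 \int_M|E_{g_1}|^2\,dv_{g_1} + 6V(g_1,\gb) + \int_{\pa M}\cB_{\gb}\,dv_h,
\]
while Theorem~\ref{main2} applied to $g_2$ gives
\[
8\pi^2\chi(M) = \tfrac14 \int_M|W|^2\,dv_{g_2} + 6V(g_2,\gb) + \int_{\pa M}\cB^{\sigma_2}_{\gb}\,dv_h.
\]

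Next I would align the right-hand sides using two elementary facts. First, $|W|^2\,dv_g$ is a pointwise conformal invariant in dimension four, so $\int_M|W|^2\,dv_{g_1}=\int_M|W|^2\,dv_{g_2}$ since $g_1,g_2\in[\gb]$. Second, the remark immediately preceding the corollary gives $\cB^{\sigma_2}_{\gb}=\cB_{\gb}$ whenever $\pa M$ is umbilic with respect to $\gb$, so the boundary integrals agree as well. Subtracting the two formulas therefore collapses everything to
\[
V(g_1,\gb) - V(g_2,\gb) = \tfrac{1}{12}\int_M |E_{g_1}|^2\,dv_{g_1} \geq 0,
\]
which is precisely \eqref{Newton}.

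For the equality case, the displayed identity shows $V(g_2,\gb)=V(g_1,\gb)$ iff $E_{g_1}\equiv 0$; combined with the singular-Yamabe normalization $R_{g_1}=-12$ this is equivalent to $\Ric(g_1)=-3g_1$, so $g_1$ is Poincar\'e--Einstein. Since a Poincar\'e--Einstein metric automatically satisfies the singular $\sigma_k$-Yamabe equation for every $k$, $g_1$ is itself a solution of the singular $\sigma_2$-Yamabe problem in $[\gb]$. The only nonroutine point left, and the main obstacle I expect, is identifying $g_1$ with $g_2$: for this I would appeal to uniqueness for the defining function of the singular $\sigma_2$-Yamabe problem, observing that the polyhomogeneous expansions of $u_1$ and $u_2$ are determined by $\gb$ to all formal orders as recorded in the discussion surrounding \eqref{sigmakequation}, and then matching them at the indicial root $0$ to force $u_1=u_2$.
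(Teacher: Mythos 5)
Your derivation of the inequality is exactly the paper's argument: the corollary is presented there as an immediate consequence of the observation that $\cB^{\sigma_2}_{\gb}=\cB_{\gb}$ when $\pa M$ is umbilic, and subtracting \eqref{mainformula2} from \eqref{mainformulaumbilic}, using the conformal invariance of $|W|^2\,dv_g$ in dimension four, yields precisely your identity $V(g_1,\gb)-V(g_2,\gb)=\tfrac{1}{12}\int_M|E_{g_1}|^2\,dv_{g_1}\ge 0$, with equality forcing $E_{g_1}=0$ and hence $g_1$ Poincar\'e--Einstein.

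On the equality case, you are right that identifying $g_1$ with $g_2$ is the one nonroutine point, but the mechanism you propose will not close it: matching the polyhomogeneous expansions of $u_1$ and $u_2$ can only show that they agree to infinite order at $\pa M$, and formal asymptotics never determine a solution of the equation globally, so this cannot by itself force $u_1=u_2$. What is actually needed is a uniqueness theorem for the singular $\sigma_2$-Yamabe problem in a fixed conformal class, which the paper does not supply either --- it records uniqueness only for $k=1$ (Loewner--Nirenberg) and explicitly notes that the negative-admissible theory for $k\ge 2$ is incomplete. The paper's reading of the equality statement is effectively: equality forces $g_1$ to be Poincar\'e--Einstein, a Poincar\'e--Einstein metric solves the singular $\sigma_k$-Yamabe problem for every $k$, and the identification with the given $\sigma_2$-solution $g_2$ is taken for granted. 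So your proof matches the paper's up to this shared gap; just replace the appeal to expansion-matching with an explicit appeal to (or assumption of) uniqueness for the $\sigma_2$ problem.
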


As a by-product of our analysis of the Chern-Gauss-Bonnet
formula, we will deduce 
that the log coefficient vanishes in the volume expansion for the
$\sigma_2$-Yamabe problem:  
\begin{theorem}\label{nologs}
Let $n=3$.  Then $\cE^{\sigma_2}=0$.   
\end{theorem}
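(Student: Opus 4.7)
The plan is to specialize the conformal invariance of $\Vt^{\sigma_2}(g)$ from Theorem~\ref{main2} to a one-parameter family of constant rescalings, which reduces the claim to an explicit scaling calculation.

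First I would compute how $V(g,\gb)$ transforms under $\gbh=e^{2c}\gb$ with $c\in\R$ constant. Because $c$ is constant, the $\gbh$-distance to $\pa M$ is $\hat r=e^c r$, so $\{\hat r>\ep\}=\{r>e^{-c}\ep\}$. Substituting $e^{-c}\ep$ for $\ep$ in \eqref{sypvolexp} and reading off the constant term gives
\[
V(g,\gbh)=V(g,\gb)+c\,\cE^{\sigma_2}.
\]
This is the standard mechanism by which the log coefficient is extracted as the constant-rescaling anomaly of the renormalized volume.

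Next I would verify that $\int_{\pa M}\cB^{\sigma_2}_\gb\,dv_h$ is invariant under the same constant rescalings. Under $\gbh=e^{2c}\gb$ with $c$ constant, one has $\hat\nub=e^{-c}\nub$, $\hat L=e^c L$, $\hat H=e^{-c}H$, $\hat\Rb=e^{-2c}\Rb$, $\hat\Rb_{00}=e^{-2c}\Rb_{00}$, $\hat\Rb_{ij}=\Rb_{ij}$ and $\hat\Wb_{0i0j}=\Wb_{0i0j}$, with boundary indices raised by $\hat h^{ij}=e^{-2c}h^{ij}$. Inspecting the eight summands of \eqref{B2} one by one, each scales as $e^{-3c}$, so $\cB^{\sigma_2}_{\gbh}=e^{-3c}\cB^{\sigma_2}_\gb$ pointwise on $\pa M$. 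Combined with $dv_{\hat h}=e^{3c}dv_h$ in boundary dimension three, the boundary integral is preserved.

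Since Theorem~\ref{main2} asserts that $\Vt^{\sigma_2}(g)=V(g,\gb)+\tfrac16\int_{\pa M}\cB^{\sigma_2}_\gb\,dv_h$ is conformally invariant, the two previous steps force $V(g,\gbh)=V(g,\gb)$ under the constant rescaling, and hence $c\,\cE^{\sigma_2}=0$ for every $c\in\R$, giving $\cE^{\sigma_2}=0$. There is no serious obstacle in this argument: the essential content is carried by Theorem~\ref{main2}, while the scaling check in the middle step is a routine application of the conformal transformation laws for $L$, $H$, and curvature.
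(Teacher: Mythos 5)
Your argument is correct as a formal deduction, and the scaling computations check out: under $\gbh=e^{2c}\gb$ with $c$ constant one indeed has $V(g,\gbh)=V(g,\gb)+c\,\cE^{\sigma_2}$ (the paper states exactly this transformation law in \S\ref{anomalies}), and each summand of \eqref{B2} scales as $e^{-3c}$ against $dv_{\hat h}=e^{3c}dv_h$, so the boundary integral is unchanged. However, your route is genuinely different from the paper's, and it is worth flagging the logical dependency it creates. The paper proves Theorem~\ref{nologs} \emph{inside} the proof of Theorem~\ref{main2}, and in fact establishes it \emph{before} the conformal invariance of $\Vt^{\sigma_2}$: the key observation is that the boundary integral \eqref{integrand2} in the Chern--Gauss--Bonnet identity on $\{r\geq\ep\}$ has no $\log\ep$ term (the log term in $u$ only produces an $\ep\log\ep$ contribution), so the forced cancellation of divergences against $6\Vol_g(\{r>\ep\})$ leaves no counterpart for $\cE^{\sigma_2}\log\frac{1}{\ep}$, whence $\cE^{\sigma_2}=0$; only then does taking $\ep\to 0$ yield \eqref{mainformula2} and the invariance of $\Vt^{\sigma_2}$. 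So your proof is not circular when Theorem~\ref{main2} is taken as a black box, but it does not provide an independent path to the result: all of the content is front-loaded into the harder theorem, whose own proof already contains $\cE^{\sigma_2}=0$ as a necessary intermediate step. What your approach buys is a clean conceptual explanation --- $\cE^{\sigma_2}$ is precisely the constant-rescaling anomaly of $V$, and no integral of a local boundary density of the correct weight can have such an anomaly; the paper makes this same point in \S\ref{anomalies} as a necessary condition, and also offers a second, fully computational proof there by showing $v^{(3)}=\tfrac13 L_{ij,}{}^{ij}-\tfrac16\Delta H$ is a divergence. If you want a self-contained argument, either reproduce the no-log-term observation for \eqref{integrand2} or the explicit computation of $v^{(3)}$.
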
  

This paper is organized as follows.  
In \S\ref{calcs} we prove Proposition~\ref{umbilicE} and
Theorem~\ref{main}.  As one would anticipate, Theorem~\ref{main} is proved
by taking a limit of the Chern-Gauss-Bonnet formula on $\{r\geq \ep\}$ as 
$\ep\rightarrow 0$.  In order to identify the boundary term $\cB_{\gb}$, we 
have to compute the expansion of the solution $u$ to one higher order than
in \cite{G3}.  In \S\ref{calcs2} we discuss the singular $\sigma_k$-Yamabe
problem and use the same sort of argument as in the case $k=1$ to prove
Theorems~\ref{main2} and \ref{nologs}.  In the process we calculate the
first few terms in the  
expansion of the solution for the singular $\sigma_2$-Yamabe problem when
$n=3$.  In \S\ref{anomalies} we discuss renormalized volume coefficients
and anomalies.  We derive a 
general result (Proposition~\ref{anomalyprop}) identifying the
infinitesimal anomaly  for the renormalized volume of an asymptotically 
hyperbolic metric in terms of the full set of its renormalized volume
coefficients.  We make explicit these 
coefficients for solutions of the singular $\sigma_k$-Yamabe problem for
$k=1$, $2$ when $n=3$.  These calculations allow us to 
give another proof by direct calculation of the conformal invariance of
$\Vt^{\sigma_k}(g)$ in these cases.  

In \S\ref{related} we consider two
other generalizations of the singular Yamabe problem.  The first is the
singular $\sigma_k(\Ric)$-problem: given $\gb$, find $g=u^{-2}\gb$
asymptotically hyperbolic so that $\sigma_k(g^{-1}\Ric_g)= const$.  It was 
shown in \cite{GSW} that this problem always has a unique solution, just
like the 
singular Yamabe problem.  The asymptotics of the solution are studied in
\cite{W} in the case of domains in Euclidean space.  We discuss a version
of the Chern-Gauss-Bonnet Theorem for 
solutions of this problem which follows by the same arguments as in the
case $k=1$ above.  Finally, we describe some results for the singular
$v_k$-Yamabe problem, where $v_k$ denotes the $k^{\text{th}}$
Poincar\'e-Einstein renormalized volume coefficient, with proofs deferred
to a future paper. 
It holds that $v_k=\sigma_k(g^{-1}P)$ when $k=2$ or $g$ is locally 
conformally flat, and it was pointed out in \cite{CF} that in several
regards, $\sigma_k(g^{-1}P)$ should be replaced by $v_k$ for $k>2$ and
general metrics.  The results are: a generalization of Theorem~\ref{nologs} 
to higher dimensions, a generalization to higher $k$ of the result of
\cite{G3}, \cite{GoW} that the Euler-Lagrange equation for the energy $\cE$
is a multiple of $\cL$, and a higher-dimensional version of the
Chern-Gauss-Bonnet Theorem for solutions of the singular $v_k$-Yamabe
problem with $2k=n+1$, generalizing a theorem of \cite{CQY} for
Poincar\'e-Einstein metrics.  These results indicate that the singular
$v_k$-Yamabe problem is perhaps the natural setting for these questions. 
However, existence of solutions of the singular   
$v_k$-Yamabe problem has not been studied for general metrics when $k>2$.  
It would be interesting to investigate the possibility of extending to
these equations the existence and uniqueness results of \cite{GLN} for
viscosity solutions.

%
%

\section{Proofs of Proposition~\ref{umbilicE} and
  Theorem~\ref{main}}\label{calcs}
We are interested in singular Yamabe metrics $g$ in dimension
$n+1$, $n\geq 2$, admitting a smooth compactification $\gb=u^2g$.
We use the normal exponential map
$\exp:[0,\de)_r\times \pa M\rightarrow M$ with respect to $\gb$
to identify a neighborhood of $\pa M$ with $[0,\de)_r\times \pa M$.  In
this identification, $\gb$ takes the form
\begin{equation}\label{geoform}
\gb=dr^2+h_r
\end{equation}
for a smooth one-parameter family of metrics $h_r$ on $\pa M$.  In
particular, $r$ is the $\gb$-distance to $\pa M$.  We denote by $h=h_0$ the
induced metric on $\pa M$.
It follows from \cite{M}, \cite{ACF}
that the defining function $u$ has an asymptotic expansion of the form
\eqref{uexpand}, where $\cL$ and the indicated $u^{(j)}$ are smooth,
locally determined functions on $\pa M$.

We first consider the asymptotics of the Einstein tensor
$E=\tf(\Ric(g)) = \Ric(g) -\frac{R}{n+1}g$. 
The conformal transformation law for Ricci applied to $g=u^{-2}\gb$ gives
\begin{equation}\label{Eform}
E=\tf\big(\Ric(\gb) +(n-1)u^{-1}\nab^2u\big) 
\end{equation}
(see \eqref{Ptransform} below).  In particular, it is clear that
$|E|_{\gb}=O(r^{-1})$.  In order to prove Proposition~\ref{umbilicE}, we
need to see that the leading term vanishes if $g$ is singular Yamabe and
$\pa M$ is umbilic.

\bigskip
\noindent
{\it Proof of Proposition~\ref{umbilicE}.}
We argue by putting $g$ into asymptotically hyperbolic normal form.
Upon choosing a representative for its conformal infinity, we can write $g$
as $g=s^{-2}\big(ds^2 + k_s\big)$ relative to an identification
$M\cong  [0,\delta)_s\times\pa M$ near $\pa M$ (typically this differs
from the identification induced by $\gb$ discussed above).
Here $k_s$ is a 1-parameter family of metrics on $\pa M$.
Because of the log terms in the expansion \eqref{uexpand}, the
compactification of $g$ with respect to a smooth defining function (for
example, $r$) is not typically $C^\infty$.  So $k_s$ and the diffeomorphism
putting $g$ into normal form typically have log terms in their expansions.
It is not hard to verify that the log terms occur far enough out that they
do not affect the subsequent argument.

It is straightforward to calculate the leading term of $\Ric(g)$ for 
$g=s^{-2}\big(ds^2 + k_s\big)$, for instance by applying the conformal
transformation law for $\Ric$ with conformal factor $s^{-2}$.  See, for
example, (2.4)-(2.6) of \cite{GH}.  The result is that $\Et:=\Ric(g) +ng$
is given by:
\begin{equation}\label{Etasymp}
\begin{split}
\Et_{ij}&=\tfrac{1}{2s}\big[(n-1)k_{ij}' +k^{pq}k'_{pq}k_{ij}\big]+O(1)\\
\Et_{i0}&=O(1)\\
\Et_{00}&=\tfrac{1}{2s}k^{pq}k'_{pq}+O(1),
\end{split}
\end{equation}
where $'=\pa_s$.  Taking the trace gives
\[
R+n(n+1)=\tr_g\Et=s^2\big(k^{ij}\Et_{ij}+\Et_{00}\big)
=nsk^{ij}k'_{ij}+O(s^2).
\]
Hence $R+n(n+1)=O(s^2)$ if and only if $k^{ij}k'_{ij}=0$ at $s=0$.
And $\pa M$ is umbilic for $g$ if and only if $\tf_k k'=0$ at $s=0$.
So if $g$ is singular Yamabe and $\pa M$ is umbilic, then
$k'|_{s=0}=0$.  In this case \eqref{Etasymp} shows that all components of
$\Et$ are $O(1)$, so also all components of $E=\tf \Et$ are $O(1)$, so
$|E|_{\gb}\in L^\infty(M)$.
\stopthm

We remark that in \eqref{Eexpand} below, we identify the leading $r^{-1}$ 
term of $E$ for a singular Yamabe metric written in the form $g=u^{-2}\gb$.    
This gives an alternate proof of Proposition~\ref{umbilicE}.

Recall that the Chern-Gauss-Bonnet formula \eqref{cgb}
implies that $\int_M \si_2(\gb^{-1}\Pb)\,dv_{\gb}$ is conformally invariant
on a compact 4-dimensional manifold without boundary.  Thus under a
conformal change $g=u^{-2}\gb$, the quantity
$\si_2(\gb^{-1}\Pb)-u^{-4}\si_2(g^{-1}P)$ must be expressible as a
divergence with respect to $\gb$.  The next lemma identifies this
divergence.
\begin{lemma} \label{s2eqn}
For $n=3$, one has
\begin{equation}\label{Pdiv}
\begin{split}
4\si_2(\gb^{-1}\Pb)=4&u^{-4}\si_2(g^{-1}P)\\
+&2\nab^\al\Big(u^{-3}|du|^2_{\gb}u_{\al} -u^{-2}(\Db u) u_{\al}
+u^{-2}u_{\al\be}u^\be +u^{-1}\Rb_{\al\be}u^{\be} -\tfrac12
u^{-1}\Rb u_{\al}\Big)
\end{split}
\end{equation}
On the right-hand side, indices are raised using $\gb$ and all covariant
derivatives (as in $u_{\al\be}$) are with respect to $\nab$.  Our sign
convention is $\Db=\nab^\al\nab_\al$.
\end{lemma}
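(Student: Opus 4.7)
The plan is to verify the identity by direct computation, matching the two sides term-by-term.

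I would begin by substituting the conformal transformation law for the Schouten tensor: since $g = u^{-2}\gb$,
\[
P_g = \Pb + u^{-1}\nab^2 u - \tfrac12 u^{-2}|du|^2_{\gb}\,\gb.
\]
Using the scaling $\si_2(g^{-1}X) = u^4\si_2(\gb^{-1}X)$ together with the polarization identity $\si_2(X+Y) = \si_2(X) + \si_2(Y) + (\tr X)(\tr Y) - \tr(XY)$ applied iteratively to the three-term decomposition of $P_g$, I would expand $u^{-4}\si_2(g^{-1}P_g)$ into $\si_2(\gb^{-1}\Pb)$ plus cross terms involving $\Jb$, $\Pb$, $\Db u$, the Hessian $u_{\al\be}$, and $|du|^2$. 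In dimension four this uses $\gb^{\al\be}\gb_{\al\be}=4$ and $\tr_{\gb}\Pb = \Jb$. The output is an explicit polynomial expression for $4\si_2(\gb^{-1}\Pb) - 4u^{-4}\si_2(g^{-1}P_g)$ in these variables.

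I would then compute $\nab^\al V_\al$ for the vector field $V$ on the right of \eqref{Pdiv} using Leibniz, the contracted Bianchi identity $\nab^\al \Rb_{\al\be} = \tfrac12 \nab_\be \Rb$, and the Ricci commutator for Hessians of functions, $\nab^\al u_{\al\be} = \nab_\be \Db u + \Rb_{\be\ga}u^\ga$. Four cancellations should occur: the $\nab_\be \Rb$ terms between the contributions of $u^{-1}\Rb_{\al\be}u^\be$ and $-\tfrac12 u^{-1}\Rb u_\al$; the $\nab_\be \Db u$ terms between $-u^{-2}(\Db u)u_\al$ and $u^{-2}u_{\al\be}u^\be$; the $\Rb_{\al\be}u^\al u^\be$ terms between $u^{-2}u_{\al\be}u^\be$ and $u^{-1}\Rb_{\al\be}u^\be$; and the $u_{\al\be}u^\al u^\be$ terms between $u^{-3}|du|^2 u_\al$ and $u^{-2}u_{\al\be}u^\be$. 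What survives is expressible in the same variables as the first calculation, and rewriting $\Rb = 6\Jb$ and $\Rb_{\al\be} = 2\Pb_{\al\be} + \Jb\,\gb_{\al\be}$ (valid for $n=3$) brings the two expansions into identical form.

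The difficulty is purely bookkeeping rather than conceptual: each side produces roughly a dozen terms and the match relies on the four cancellations above. To control sign errors I would group terms by homogeneity in powers of $u^{-1}$ and verify the coefficients of $u^{-4}|du|^4$, $u^{-3}|du|^2\,\Db u$, $u^{-2}(\Db u)^2$, $u^{-2}|u_{\al\be}|^2$, $u^{-2}\Jb|du|^2$, $u^{-1}\Pb^{\al\be}u_{\al\be}$, and $u^{-1}\Jb\,\Db u$ separately. The existence of some such divergence identity is forced by \eqref{cgb}, since on a closed four-manifold $\int_M \si_2(g^{-1}P)\,dv_g$ is a conformal invariant; the role of the lemma is to identify the particular divergence in a form adapted to the asymptotic analysis of the boundary term in Theorem~\ref{main}.
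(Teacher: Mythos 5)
Your proposal is correct and follows essentially the same route as the paper: substitute the conformal transformation law $P_{\al\be}=\Pb_{\al\be}+u^{-1}u_{\al\be}-\tfrac12 u^{-2}|du|^2\gb_{\al\be}$ into $\si_2$, obtain the explicit polynomial form of the transformation law, and then verify that expanding the divergence (via Leibniz, the contracted Bianchi identity, and the commutation identity for $\nab^\al u_{\al\be}$) reproduces the same relation after rewriting $\Rb=6\Jb$ and $\Rb_{\al\be}=2\Pb_{\al\be}+\Jb\gb_{\al\be}$. The four cancellations you list are exactly the ones that occur, and your polarization identity for $\si_2$ is just the expanded form of $\tfrac12(J^2-|P|^2)$ used in the paper.
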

\begin{proof}
This is a reformulation of the transformation law of
$\si_2(g^{-1}P)=\frac12(J^2-|P|^2)$ under conformal
change.  Recall that under the change $g=e^{2\om}\gb$, the Schouten tensor
transforms by
$P_{\al\be}=\Pb_{\al\be}-\om_{\al\be}+\om_{\al} \om_{\be}
-\frac12 |d\om|^2 \gb_{\al\be}$.  Setting $\om = -\log u$ gives
\begin{equation}\label{Ptransform}
P_{\al\be}=\Pb_{\al\be}+u^{-1}u_{\al\be}
-\tfrac12 u^{-2}|du|^2\gb_{\al\be},
\end{equation}
and taking the trace gives $u^{-2}J=\Jb+u^{-1}\Db u-2u^{-2}|du|^2$.
Now substitute into $4\si_2(g^{-1}P)=2(J^2-|P|^2)$ and
simplify to obtain
\[
\begin{split}
4u^{-4}\si_2(g^{-1}P)=&4\si_2(\gb^{-1}\Pb)
+6u^{-4}|du|^4-6u^{-3}\Db u|du|^2\\
&+2u^{-2}\big[(\Db u)^2 -|\nab^2 u|^2-3\Jb|du|^2\big]
-4u^{-1}\big[\Pb_{\al\be}u^{\al\be}-\Jb\Db u\big].
\end{split}
\]
Equation \eqref{Pdiv} reduces to this same relation upon expanding the
divergence.
\end{proof}

\bigskip
\noindent
{\it Proof of Theorem~\ref{main}.}
First we give the proof modulo identification of the explicit form
of $\cB_{\gb}$.  Then we calculate \eqref{B}.

First apply the Chern-Gauss-Bonnet Theorem for smooth 
manifolds-with-boundary to the metric 
$\gb$ on $\{r\geq \ep\}$ with $\ep>0$ small.  It states
\[
8\pi^2\chi(M)=\int_{r>\ep}\Big(\tfrac14 |\Wb|^2_{\gb}
+4\si_2(\gb^{-1}\Pb)\Big)dv_{\gb} + \int_{r=\ep}S\,dv_{h_\ep},
\]
where the boundary integrand $S$ can be written in the form 
\begin{equation}\label{S}
S=\Rb H-2\Rb_{00}H-2\Rb^k{}_{ikj}L^{ij}+\tfrac23 H^3 -2H|L|^2 +\tfrac43 \tr
(L^3)
\end{equation}
(see, for example, \cite{C}). 
In this formula for $S$, $L$ and $H$ refer to the second fundamental form
and mean curvature of $\{r=\ep\}$ for the metric $\gb$ with respect to the
inward pointing unit normal.  Use
$|\Wb|^2_{\gb}\,dv_{\gb} =|W|^2_g\,dv_g$,
substitute \eqref{Pdiv} and then \eqref{sig2} with $R=-12$, and integrate
the divergence by parts to obtain
\begin{equation}\label{epeq}
\begin{split}
8\pi^2\chi(M)=&\frac14 \int_{r>\ep}|W|^2_g\,dv_g
-\frac12 \int_{r>\ep}|E|^2_g\,dv_g +6\Vol_g(\{r>\ep\})  \\
-&2\int_{r=\ep}\Big(u^{-3}|du|^2_{\gb}u_0 -u^{-2}(\Db u) u_0
+u^{-2}u_{0\be}u^\be +u^{-1}\Rb_{0\be}u^\be -\tfrac12 u^{-1}\Rb
u_0\Big)dv_{h_{\ep}}\\
+&\int_{r=\ep}S\,dv_{h_\ep}.  
\end{split}
\end{equation}

As $\ep\to 0$, the first term on the right-hand side converges to
$\frac14 \int_M |W|^2_g\,dv_g$ and the last term converges to
$\int_{\pa M}S\,dv_h$.  Thus the sum of the other three terms on
the right-hand side converges as $\ep\to 0$.  However, typically each of
them diverges individually.  The expansion of $\Vol_g(\{r>\ep\})$ is given
by \eqref{sypvolexp}.   It follows from \eqref{Eform} and \eqref{uexpand}
that
\[
|E|^2_g\,dv_g = |E|^2_{\gb}\,dv_{\gb}
= \big(F_2 r^{-2} + F_3 r^{-1} +O(1)\big)\,dv_{\gb}
\]
for smooth functions $F_2$, $F_3$ on $\pa M$.
Therefore integration shows that
\begin{equation}\label{intEexpand}
\int_{r>\ep}|E|^2_g\,dv_g = a\ep^{-1} + \cF \log\frac{1}{\ep}
+\text{ fp} \int_{r>\ep}|E|^2_g\,dv_g +o(1), \qquad a,\cF\in \R,
\end{equation}
where by definition $\text{ fp} \int_{r>\ep}|E|_g^2\,dv_g$ denotes the
constant term in the expansion.  As for the boundary integral,
consideration of the form which results upon substituting \eqref{uexpand}
into each term individually shows that
\begin{equation}\label{integrand}
\begin{split}
\Big(u^{-3}|du|^2_{\gb}u_0 &-u^{-2}(\Db u) u_0
+u^{-2}u_{0\be}u^\be +u^{-1}\Rb_{0\be}u^\be -\tfrac12 u^{-1}\Rb
u_0\Big)\Big|_{r=\ep}\,dv_{h_{\ep}}\\
&=\big(B_0\ep^{-3} + B_1\ep^{-2} +B_2\ep^{-1} +B_3 +o(1)\big)\,dv_h
\end{split}
\end{equation}
for smooth locally determined functions $B_0$, $B_1$, $B_2$, $B_3$ on $\pa
M$.  The log term in $u$ does not affect the expansions to this order:  it
generates an $\ep\log \ep$ term in this expansion.

Combining the terms, we deduce first that the divergent terms must cancel:
\begin{equation}\label{canceldivs}
\begin{aligned}
2\int_{\pa M}B_0\,dv_h&=6c_0\\
2\int_{\pa M}B_1\,dv_h&=6c_1\\
2\int_{\pa M}B_2\,dv_h&=6c_2-\tfrac12 a\\
0&=6\cE-\tfrac12 \cF.
\end{aligned}
\end{equation}
Then taking the limit gives
\[
8\pi^2\chi(M)=\frac14 \int_M|W|^2_g\,dv_g
-\frac12 \text{ fp}\int_{r>\ep}|E|^2_g\,dv_g +6V 
+\int_{\pa M}(-2B_3+S)\,dv_h.
\]
This proves Theorem~\ref{main} once we carry out the calculation that
$S-2B_3=\cB_{\gb}$ modulo divergence terms.

In order to calculate $B_3$, we need to expand all the ingredients
appearing in \eqref{integrand}, namely $u$, $\gb$, and $dv_{h_{\ep}}$, to
high enough order to evaluate the constant term.  The expansions were
calculated in \cite{G3} to one order lower than required here.

Begin with $\gb=dr^2+h_r$.  Denoting $\pa_r$ by $'$, the derivatives of
$h_r$ at $r=0$ are given by:
\begin{equation}\label{derivs}
h'_{ij}=-2L_{ij},\qquad h''_{ij}=-2\Rb_{0i0j} +2L_{ik}L^k{}_j,
\qquad h'''_{ij}=-2\Rb_{0i0j,0}+8L^k{}_{(i} \Rb_{j)0k0}.
\end{equation}
The latter two equations can be derived by writing out the expressions for
the curvature components $\Rb_{0i0j}$ and $\Rb_{0i0j,0}$ in local
coordinates.  Taking the trace with respect to the metric $h=h_0$ gives
\begin{equation}\label{traces}
\tr h'=-2H,\qquad \tr h''=-2\Rb_{00} +2|L|^2
\qquad \tr h'''=-2\Rb_{00,0}+8L^{ij} \Rb_{0i0j}.
\end{equation}

Composing the expansion of $\sqrt{1+x}$ with that of the determinant shows
that for any 1-parameter family of metrics $h_r$, one has
\begin{equation} \label{dvt}
\sqrt{\frac{\det h_r}{\det{h_0}}} = 1 +D_1r +D_2r^2+D_3r^3+\cdots
\end{equation}
with
\[
\begin{aligned}
D_1&=\tfrac12 \tr h'\\
D_2&=\tfrac14\big[\tr h''-|h'|^2+\tfrac12 (\tr h')^2\big]\\
D_3&=\tfrac{1}{12}\big[\tr h'''-3\langle h',h''\rangle
+2\tr (h'^3) +\tfrac32 (\tr  h')(\tr h'')  -\tfrac32 (\tr h')|h'|^2
+\tfrac14 (\tr h')^3\big].
\end{aligned}
\]
Substituting \eqref{derivs} and \eqref{traces} gives
\begin{equation}\label{Ds}
\begin{aligned}
D_1&=-H\\
D_2&=\tfrac12\big[-\Rb_{00}-|L|^2+H^2\big]  \\
D_3&=\tfrac16\big[-\Rb_{00,0}-2L^{ij}\Rb_{0i0j}-2\tr(L^3)
  +3H\Rb_{00}+3H|L|^2-H^3\big].
\end{aligned}
\end{equation}
The above formulas hold in general dimension.

The expansion of $u$ is determined by the condition $R_g=-n(n+1)$ with
$g=u^{-2}\gb$.  Necessarily $g$ is asymptotically hyperbolic
(i.e. $|du|_{\gb}=1$ on $\pa M$), since the scalar curvature of the
conformally compact metric $u^{-2}\gb$
is asymptotic to $-n(n+1)|du|^2_{\gb}$.  Thus we write $u=r+r^2\fe$.  Now
write the equation $R_g=-n(n+1)$ in terms 
of $u$ via conformal transformation, and then write the resulting equation
in terms of $\fe$.  The result (see the derivation of (2.5) of \cite{G3})
is that $\fe$ satisfies
\begin{equation}\label{fe}
\begin{split}
(1+r\fe)&\Big[ r^2\fe_{rr}+4r\fe_r+2\fe
+\tfrac12 h^{ij}h'_{ij} (1+2r\fe+r^2\fe_r)+r^2\Delta_{h_r}\fe\Big]\\
-\frac{n+1}{2}&\Big[2(r\fe_r+2\fe)+r(r\fe_r+2\fe)^2+r^3h^{ij}\pa_i\fe\pa_j\fe\Big]\\
+\frac{1}{2n}&r(1+r\fe)^2R_{\gb}=0.
\end{split}
\end{equation}
We need to determine the Taylor expansion of $\fe$ through order 2 by
successive differentiation of \eqref{fe} at $r=0$.  The evaluation of
$\fe|_{r=0}$ and $\pa_r\fe|_{r=0}$ was given in \cite{G3}, although
there the Gauss equation was used to rewrite some expressions in terms of
intrinsic  curvature of $h$.  Here we leave everything in terms of $L$ and
curvature of $\gb$.

Setting $r=0$ in \eqref{fe} and solving for $\fe$ give
\begin{equation}\label{fe0}
\fe|_{r=0}=-\tfrac{1}{2n}H.
\end{equation}
Differentiating once, setting $r=0$, and solving for $\fe_r$ give
\[
3(n-1)\fe_r|_{r=0}=\tfrac12 \tr h''-\tfrac12 |h'|^2
+\tfrac32\fe\tr h'-2n\fe^2+\tfrac{1}{2n}\Rb.
\]
Upon substituting from \eqref{derivs}, \eqref{traces} and \eqref{fe0}, and
decomposing $|L|^2=|\Lo|^2+\frac{1}{n} H^2$, this simplifies to
\begin{equation}\label{fe1}
3(n-1)\fe_r|_{r=0}=-\Rb_{00}-|\Lo|^2+\tfrac{1}{2n}\Rb.
\end{equation}
Differentiating \eqref{fe} twice, setting $r=0$, and solving for $\fe_{rr}$
give
\[
\begin{split}
4(n-2)\fe_{rr}|_{r=0}=\tfrac12 &\tr h'''-\tfrac32 \langle h',h''\rangle +\tr
(h'^3) +3\fe\tr h''-3\fe |h'|^2+4\fe_r\tr h'\\
&+2\fe^2\tr h' +4(1-3n)\fe\fe_r
+2\Delta_h \fe +\tfrac{1}{n}\Rb_{,0}+\tfrac{2}{n}\fe\Rb.
\end{split}
\]
Henceforth we take $n=3$.  Substituting from \eqref{derivs},
\eqref{traces}, \eqref{fe0} and \eqref{fe1}, and simplfying, this can be
written
\begin{equation}\label{fe2}
\begin{split}
12\fe_{rr}|_{r=0}=-3\Rb_{00,0}+\Rb_{,0}-&\Delta_hH-6L^{ij}\Rb_{0i0j}-6\tr(L^3)\\
+&\tfrac{13}{3}H|\Lo|^2+\tfrac{13}{3}H\Rb_{00}-\tfrac59 H\Rb+\tfrac23 H^3.
\end{split}
\end{equation}

Now we can determine the $B_j$, $0\leq j\leq 3$, by expanding the
left-hand side of \eqref{integrand}.  Recalling $u=r+r^2\fe$, we write
$\fe = f_0+f_1r+f_2 r^2+o(r^2)$ with each $f_j\in C^\infty(\pa M)$, so
\[
u=r+f_0r^2+f_1r^3+f_2 r^4 +o(r^4)
=r\big(1+f_0r+f_1r^2+f_2r^3 +o(r^3)\big)
\]
and
\[
f_0=\fe|_{r=0}, \qquad f_1=\fe_r|_{r=0}, \qquad
f_2=\tfrac12 \fe_{rr}|_{r=0}
\]
are determined above.

First we evaluate the expansions of the ingredients to the
relevant orders.  Details of the verifications of these expansions are left
to the reader.
\begin{equation}\label{firstderivs}
\begin{aligned}
u_0&=1+2f_0r+3f_1r^2+4f_2r^3+o(r^3)\\
u_i&=O(r^2)
\end{aligned}
\end{equation}
so
\begin{equation}\label{du2}
|du|^2=u_0^2+O(r^4)=1+4f_0r+(6f_1+4f_0^2)r^2+(8f_2+12f_0f_1)r^3+o(r^3).
\end{equation}
The inverse powers are given by
\[
\begin{aligned}
u^{-3}&=r^{-3}\big[1-3f_0
r+(-3f_1+6f_0^2)r^2+(-3f_2+12f_0f_1-10f_0^3)r^3+o(r^3)\big]\\
u^{-2}&=r^{-2}\big[1-2f_0r+(-2f_1+3f_0^2)r^2+o(r^2)\big]\\
u^{-1}&=r^{-1}\big[1-f_0r+o(r)\big].
\end{aligned}
\]

The Christoffel symbols of $\gb$ are:
\begin{equation}\label{christofs}
\Gb_{\al\be}^0=
\begin{pmatrix}
0&0\\
0&-\tfrac12 h_{ij}'
\end{pmatrix},\qquad
\Gb_{\al\be}^k=
\begin{pmatrix}
0&\tfrac12h^{kl}h_{lj}'\\
\tfrac12 h^{kl}h_{il}'&\Gamma_{ij}^k
\end{pmatrix},
\end{equation}
where here unadorned $h$'s and $\Gamma$'s refer to
$h_r$.  Using \eqref{christofs}, one calculates for the second covariant
derivatives $u_{\al\be}=\nab_\al\nab_\be u$:
\begin{equation}\label{secondd}
\begin{aligned}
u_{00}&=\pa_{00}^2u=2f_0+6f_1r+12f_2r^2+o(r^2)\\
u_{i0}&=O(r).
\end{aligned}
\end{equation}
The tangential second covariant derivatives are
$u_{ij}=\pa_{ij}^2u-\Gb_{ij}^\al u_{\al}=\frac12
h_{ij}'u_0+\nabla_{ij}^2f_0 r^2+o(r^2)$, where $\nabla_{ij}^2$ refers to
the covariant derivatives for the metric $h_0$.  Taylor expanding $h_{ij}'$
and multiplying by $u_0$ from \eqref{firstderivs} give
\begin{equation}\label{seconddt}
u_{ij}=\tfrac12 h_{ij}'(0)+\big[\tfrac12 h_{ij}''(0)+h_{ij}'(0)f_0\big]r
+\big[\nabla_{ij}^2f_0+\tfrac14 h_{ij}'''(0)+h_{ij}''(0)f_0
+\tfrac32 h_{ij}'(0)f_1\big]r^2+o(r^2).
\end{equation}
Since
\[
h^{ij}=h^{ij}(0)-(h')^{ij}(0)r
+\tfrac12\big(-(h'')^{ij}+2(h')^{ik}(h')_k{}^j\big)(0)r^2+o(r^2),
\]
one obtains upon substituting and expanding:
\begin{equation}\label{lapl}
\begin{split}
\Db u=&u_{00}+h^{ij}u_{ij}\\
=&\big[2f_0+\tfrac12 h^{ij}h_{ij}'\big]
+\big[ 6f_1+\tfrac12 \tr h''+\tr h'f_0-\tfrac12 |h'|^2\big]r\\
&+\big[12f_2 +\Delta f_0+\tfrac14\tr h'''
+\tr h''f_0+\tfrac32\tr h'f_1 -\tfrac34\langle h',h''\rangle
-|h'|^2f_0+\tfrac12 \tr (h'^3)\big]r^2\\
&+o(r^2)
\end{split}
\end{equation}
Here all $h$ and derivatives are evaluated at $r=0$, and
$\Delta=\Delta_{h_0}$.
The remaining expansions that we need are
\[
\begin{aligned}
\Rb_{00}&=\Rb_{00}(0)+\Rb_{00,0}(0)r+o(r)\\
\Rb&=\Rb(0)+\Rb_{,0}(0)r +o(r).  
\end{aligned}
\]

Now multiply out the expansions for the terms in the left-hand side of
\eqref{integrand} term-by-term to obtain:
\begin{equation} \label{terms} 
\begin{split}
u^{-3}|du|^2u_0=r^{-3}&\Big[1+3f_0r+6f_1r^2
+\big(9f_2+3f_0f_1-2f_0^3\big)r^3 +o(r^3)\Big]\\
u^{-2}(\Db u) u_0=r^{-2}&\Big[\big(2f_0+\tfrac12\tr h'\big)
+\big(6f_1+\tfrac12 \tr h'' +\tr h'f_0-\tfrac12 |h'|^2)r\\
&+\big(12f_2+\Delta f_0 +\tfrac14 \tr h'''+\tr h''f_0
+\tfrac32\tr h'f_1-\tfrac34 \langle h',h''\rangle-|h'|^2f_0\\
&+\tfrac12 \tr (h'^3)+(f_1-f_0^2)(2f_0+\tfrac12 \tr h')\big)r^2
+o(r^2)\Big]\\
u^{-2}u_{0\be}u^{\be}=r^{-2}&\Big[2f_0+6f_1r+(12 f_2
+2f_0f_1-2f_0^3)r^2+o(r^2)\Big]\\
u^{-1}\Rb_{0\be}u^\be =
r^{-1}&\Big[\Rb_{00}+r(\Rb_{00,0}+f_0\Rb_{00})+o(r)\Big]\\
u^{-1}\Rb u_0=r^{-1}&\Big[\Rb+r(\Rb_{,0}+f_0\Rb)+o(r)\Big],
\end{split}
\end{equation} 
where coefficients on the right-hand side are again evaluated at
$r=0$.  Set 
\begin{equation}\label{Idef}
\cI=u^{-3}|du|^2_{\gb}u_0 -u^{-2}(\Db u) u_0
+u^{-2}u_{0\be}u^\be +u^{-1}\Rb_{0\be}u^\be -\tfrac12 u^{-1}\Rb
u_0.
\end{equation}
Combining terms and then substituting \eqref{derivs},
\eqref{traces}, \eqref{fe0}, \eqref{fe1}, \eqref{fe2} and setting $r=\ep$
result in
\begin{equation}\label{Iexpand}
\cI|_{r=\ep}=\ep^{-3}+\cI_1 \ep^{-2} +\cI_2\ep^{-1} +\cI_3+o(1),
\end{equation}
with
\begin{equation}\label{Is}
\begin{aligned}
\cI_1=&\tfrac12 H\\
\cI_2=&\Rb_{00}-\tfrac13 \Rb\\
24\cI_3=&9\Rb_{00,0}-3\Rb_{,0}-5\Delta H-30L^{ij}\Rb_{0i0j}-30\tr (L^3)\\
&+17H|\Lo|^2+13H\Rb_{00}-\tfrac23 H\Rb +\tfrac{26}{9}H^3.  
\end{aligned}
\end{equation}
Now \eqref{integrand} gives
\[
\begin{aligned}
B_0&=1\\
B_1&=\cI_1+D_1\\
B_2&=\cI_2+\cI_1D_1+D_2\\
B_3&=\cI_3+\cI_2D_1+\cI_1D_2+D_3.
\end{aligned}
\]
Substituting \eqref{Ds} and \eqref{Is} and simplifying give finally
\begin{equation}\label{Bs} 
\begin{aligned}
B_0&=1\\
B_1&=-\tfrac12 H\\
B_2&=\tfrac12 \Rb_{00}-\tfrac13 \Rb-\tfrac12 |L|^2\\
24 B_3&=5\Rb_{00,0}-3\Rb_{,0}-5\Delta H-38L^{ij}\Rb_{0i0j}-38\tr (L^3)\\
&\hspace{.9in} +23H|\Lo|^2-5H\Rb_{00}+\tfrac{22}{3}H\Rb
  +\tfrac{62}{9}H^3. 
\end{aligned}
\end{equation}
The definition \eqref{S} of $S$ contains the expression
$\Rb^k{}_{ikj}=\Rb_{ij}-\Rb_{0i0j}$.  Making this substitution and then
combining terms give
\begin{equation}\label{SB}
\begin{split}
12(S-2B_3)=&-5\Rb_{00,0}+3\Rb_{,0}+5\Delta H+62L^{ij}\Rb_{0i0j}+54\tr (L^3)\\
&-24L^{ij}\Rb_{ij}-47H|\Lo|^2-19H\Rb_{00}+\tfrac{14}{3}H\Rb
  -\tfrac{62}{9}H^3.
\end{split}
\end{equation}

This can be simplified by using the contracted second Bianchi identity
$\Rb_{,0}=2\Rb_{0\al,}{}^{\al}=2\Rb_{00,0}+2\Rb_{0i,}{}^i$, or equivalently
\begin{equation}\label{bian}
\Rb_{00,0}=\tfrac12 \Rb_{,0}-\Rb_{0i,}{}^i.
\end{equation}
The curvature components with exactly one zero index are given by
$\Rb_{0kjl}=\tfrac12 \big(\nabla_lh_{jk}'-\nabla_jh_{kl}'\big)$,
so $\Rb_{0j}=\frac12 h^{kl}\big(\nabla_lh_{jk}'-\nabla_jh_{kl}'\big)$.
Expanding the covariant derivative in terms of Christoffel symbols and
using \eqref{christofs}, one obtains
\[
\Rb_{0j,i}=\tfrac12 h^{kl}\nabla_i\nabla_lh_{jk}'
-\tfrac12 h^{kl}\nabla_i\nabla_jh_{kl}'
-\tfrac12 h'_i{}^k\Rb_{jk}+\tfrac12 h'_{ij}\Rb_{00}.
\]
Contracting and then substituting \eqref{derivs} give
\[
\Rb_{0i,}{}^i=-L_{ij,}{}^{ij}+\Delta H+L^{ij}\Rb_{ij}-H\Rb_{00},
\]
so substituting into \eqref{bian} shows that
\begin{equation}\label{bianfinal}
\Rb_{00,0}=\tfrac12 \Rb_{,0}+L_{ij,}{}^{ij}-\Delta H-L^{ij}\Rb_{ij}
+H\Rb_{00}.
\end{equation}
Now substitute \eqref{bianfinal} for $\Rb_{00,0}$ in \eqref{SB} and apply
\[
\begin{aligned}
\tr (L^3)&= \tr (\Lo^3) +H|\Lo|^2+\tfrac19 H^3\\
L^{ij}\Rb_{ij}&=\Lo^{ij}\Rb_{ij}+\tfrac13 H\Rb -\tfrac13 H\Rb_{00}\\
L^{ij}\Rb_{0i0j}&=\Lo^{ij}\Wb_{0i0j}+\tfrac12 \Lo^{ij}\Rb_{ij}
+\tfrac13 H\Rb_{00}
\end{aligned}
\]
to write in terms of trace-free parts.  Collecting terms gives
\[
\begin{split}
12(S-2B_3)=&\tfrac12 \Rb_{,0}-5\Lo_{ij,}{}^{ij}+\tfrac{25}{3}\Delta H
+62\Lo^{ij}\Wb_{0i0j}+54\tr (\Lo^3)\\
&+12\Lo^{ij}\Rb_{ij}+7H|\Lo|^2+3H\Rb_{00}-\tfrac53 H\Rb
  -\tfrac89 H^3.
\end{split}
\]
Upon comparison with \eqref{B}, one concludes that $S-2B_3=\cB_{\gb}$
modulo divergence terms, as claimed.
\stopthm

%
%
%

\section{The singular $\sigma_2$-Yamabe problem} \label{calcs2}

The main goal of this section is to prove Theorems~\ref{main2} and 
\ref{nologs}.  We begin by discussing the expansions 
\eqref{uexpand} and  
\eqref{sypvolexp} for solutions of the singular $\sigma_k$-Yamabe
problem on $M^{n+1}$.  
As described in the Introduction, we choose a metric $\gb\in C^\infty(M)$  
and we assume that we have a polyhomogeneous defining function $u$ 
so that \eqref{sigmakequation} holds, where $g=u^{-2}\gb$.  The
first task is to identify the relevant indicial root of the equation, which 
determines the form of the asymptotic expansion of $u$.    

Let $u_0$ be a smooth defining function for $M$ such that $g_0=u_0^{-2}\gb$
is asymptotically hyperbolic.  Thus $u_0=r+O(r^2)$, where $r$ is the
$\gb$-distance to $\pa M$.  
Consider a perturbation $u=u_0 +vr^\gamma$ for some $\ga>1$ and $v\in 
C^\infty(M)$, and set $g=u^{-2}\gb$.  The uniformly degenerate structure of 
the equation \eqref{sigmakequation} (see \cite{MP}) implies that  
\[
\sigma_k(-g^{-1}P_g)=\sigma_k(-g_0^{-1}P_{g_0})+I(\ga)vr^\gamma +o(r^\ga) 
\]
for a quadratic polynomial $I(\ga)$ called the indicial polynomial, whose 
roots are called the indical roots.  In \cite{MP}, the 
unknown was taken to be $u_{MP}$, where $u=r e^{-u_{MP}}$.
In particular, perturbing $u_{MP}$ at order
$\ga$ corresponds to perturbing $u$ at order $\ga +1$, so the indicial  
polynomial which arose in \cite{MP} was $I_{MP}(\ga)=I(\ga +1)$.  
The polynomial $I_{MP}(\ga)$ was identified near the bottom of p. 179 of
\cite{MP}:  $I_{MP}(\ga)=c_{k,n}(\ga^2-n\ga)-2k\beta_k^0$,
with roots
\[
\gamma_{\pm}=\frac{n}{2} \pm \sqrt{\frac{n^2}{4}+\frac{2k\beta_k^0}{c_{k,n}}}.
\]
Here $\beta_k^0=2^{-k}\binom{n+1}{k}$ is the constant on the right-hand 
side of \eqref{sigmakequation} and $c_{k,n}$ is the constant such that
\[
T_{k-1}(\tfrac12 I)=c_{k.n}I,
\] 
where $T_{k-1}$ is the $(k-1)$-st Newton
transform and $I$ is the $(n+1)\times (n+1)$ identity matrix.  It was
stated in \cite{MP} that $c_{k,n}=2^{1-k}\binom{n}{k}$, but in fact the
correct value is 
$c_{k,n}=2^{1-k}\binom{n}{k-1}$.  Making this correction, one obtains 
$\gamma_+=n+1$, $\gamma_-=-1$.  So for $u$ the indicial roots are $n+2$,
$0$.  The lower value of $0$ is an indicial root of the formal  
linearization of the problem, but it is not relevant for us since we 
require $u$ to vanish at $\pa M$.  So the only relevant indicial root for
$u$ is $n+2$.   The usual inductive order-by-order derivation then shows
that the expansion of a polyhomogeneous solution $u$ necessarily is of the 
form \eqref{uexpand}.  Since the volume form is given by
$dv_g=u^{-(n+1)}dv_{\gb}$, it then follows that $\Vol(\{r>\ep\})$ has an 
expansion of the form \eqref{sypvolexp} by exactly the same reasoning as in 
the case $k=1$ in \cite{G3} (or see \cite{GoW}).

\begin{proof}[Proof of Theorems~\ref{main2} and \ref{nologs}]

As in the proof of Theorem \ref{main}, we begin by applying the
Chern-Gauss-Bonnet Theorem for smooth manifolds-with-boundary for the
metric $\gb$ on $\{r\geq \ep\}$ with $\ep>0$ small: 
\[
8\pi^2\chi(M)=\int_{r>\ep}\Big(\tfrac14 |\Wb|^2_{\gb}
+4\si_2(\gb^{-1}\Pb)\Big)dv_{\gb} + \int_{r=\ep}S\,dv_{h_\ep},
\]
where $S$ is given by (\ref{S}), and $L, H$ refer to the second fundamental form
and mean curvature of $\{r=\ep\}$ for the metric $\gb$ with respect to the
inward pointing unit normal.  Using the conformal invariance of the Weyl
tensor, applying Lemma \ref{s2eqn} 
with $\sigma_2(g^{-1} P) = 3/2$, then integrating by parts give
\begin{equation}\label{epeq2}
\begin{split}
8\pi^2\chi(M)=&\frac14 \int_{r>\ep}|W|^2_g\,dv_g + 6 \Vol_g(\{r>\ep\})  \\
-&2\int_{r=\ep}\Big(u^{-3}|du|^2_{\gb}u_0 -u^{-2}(\Db u) u_0
+u^{-2}u_{0\be}u^\be +u^{-1}\Rb_{0\be}u^\be -\tfrac12 u^{-1}\Rb
u_0\Big)dv_{h_{\ep}}\\
+&\int_{r=\ep}S\,dv_{h_\ep}
\end{split}
\end{equation}

As before, as $\ep\to 0$ the first term on the right-hand side converges to
$\frac14 \int_M |W|^2_g\,dv_g$ and the last term converges to
$\int_{\pa M}S\,dv_h$.  Thus the sum of the other two terms on
the right-hand side converges as $\ep\to 0$ (though, as in the case of
singular Yamabe metrics, each of 
them diverges individually).  The expansion of $\Vol_g(\{r>\ep\})$ is given
by \eqref{sypvolexp}.   For the boundary integral, we will show that the
expansion \eqref{uexpand} implies that 
\begin{equation}\label{integrand2}
\begin{split}
\Big(u^{-3}|du|^2_{\gb}u_0 &-u^{-2}(\Db u) u_0
+u^{-2}u_{0\be}u^\be +u^{-1}\Rb_{0\be}u^\be -\tfrac12 u^{-1}\Rb
u_0\Big)\Big|_{r=\ep}\,dv_{h_{\ep}}\\
&=\big(A_0\ep^{-3} + A_1\ep^{-2} +A_2\ep^{-1} +A_3 +o(1)\big)\,dv_h
\end{split}
\end{equation}
for smooth locally determined functions $A_0$, $A_1$, $A_2$, $A_3$ on 
$\pa M$.  As before, the log term in the expansion of the solution 
$u$ does not enter into the expansion \eqref{integrand2} to this order;  it     
generates an $\ep\log \ep$ term in the expansion.  

We can immediately deduce that $\cE^{\sigma_2}=0$:  since the divergent
terms must cancel and there is no log term in \eqref{integrand2}, there
cannot be one in \eqref{sypvolexp} either.  This proves
Theorem~\ref{nologs}.      

The coefficients of the negative powers of $\ep$ must cancel as well, so it 
follows that   
\begin{equation}\label{canceldivs2}
\begin{aligned}
2\int_{\pa M}A_0\,dv_h&=6c_0,\\
2\int_{\pa M}A_1\,dv_h&=6c_1,\\
2\int_{\pa M}A_2\,dv_h&=6c_2. 
\end{aligned}
\end{equation}
Then taking the limit in \eqref{epeq2} gives
\[
8\pi^2\chi(M)=\frac14 \int_M|W|^2_g\,dv_g +6V(g,\gb)
+\int_{\pa M}(-2A_3+S)\,dv_h.
\]
This will complete the proof of Theorem~\ref{main2} once we show that
$S-2A_3=\cB_{\gb}^{\sigma_2}$ modulo divergence terms.

In order to calculate $A_3$, we need to expand all the ingredients
appearing in \eqref{integrand2}.  The expansion of $u$ is determined by the
condition 
\begin{equation} \label{S2Y}
\sigma_2( g^{-1} P_g) = \frac{3}{2}
\end{equation}
with $g=u^{-2}\gb$.  We rewrite this equation using (\ref{Pdiv}).  Expand
out the divergence on the right-hand side of \eqref{Pdiv} term-by-term
using the Leibnitz rule and multiply the whole equation by $\frac12 u^4$.
For the second and third terms inside the divergence, note that the Ricci
identity implies  
\[
\nab^{\al}\big( u_{\al\be}u^\be - (\Db u)u_\al  \big)
=  |\nab^2 u|^2 - (\Db u)^2 + \Rb_{\al\be}u^\al u^\be.   
\]
Upon collecting terms, one concludes that the equation \eqref{S2Y} can be
written as 
\begin{equation}\label{rewrite2}
\begin{split}
2u^4\sigma_2(\gb^{-1}\Pb)=&3(1-|du|^4)+3u|du|^2\Db u  +u^2|\nab^2 u|^2-u^2(\Db u)^2\\
&+\tfrac12 u^2\Rb |du|^2 
+ u^3\Rb_{\al\be}u^{\al\be}-\tfrac12 u^3 \Rb \Db u.
\end{split}
\end{equation}
As in Section \ref{calcs}, we expand $u$ as
\[
u=r+f_0r^2+f_1r^3+f_2 r^4 + O(r^5 \log r),
\]
where $f_j \in C^{\infty}(\partial M)$.  Our goal is to substitute this
into \eqref{rewrite2} and then calculate modulo $o(r^3)$.   The derivatives
of $u$ are given by \eqref{firstderivs}, \eqref{secondd}, \eqref{seconddt}, 
and $|du|^2$, $\Db u$ by \eqref{du2}, \eqref{lapl}.  Upon multiplying the
expansions and substituting \eqref{derivs} and \eqref{traces} for the
derivatives of $h$, we obtain the following expressions for each of  
the terms appearing in \eqref{rewrite2}, all modulo $o(r^3)$: 
\begin{align*}
|du|^4=&1+8f_0r+\big[12f_1+24f_0^2\big]r^2
+\big[16f_2+72f_0f_1+32f_0^3\big]r^3\\
\begin{split}
u|du|^2\Db u =& \big[2f_0-H\big]r
+\big[6f_1+10f_0^2-7Hf_0-\Rb_{00}-|L|^2\big]r^2\\
&+\big[12f_2+44f_0f_1+\D f_0 +16f_0^3-10Hf_1-18Hf_0^2\\
&\quad -7|L|^2f_0 -7\Rb_{00}f_0 -\tfrac12 \Rb_{00,0}-L^{ij}\Rb_{0i0j}
-\tr (L^3)\big]r^3 
\end{split}\\
\begin{split}
u^2|\nab^2 u|^2=& \big[4f_0^2+|L|^2\big]r^2\\
&+\big[24f_0f_1+8f_0^3 +6|L|^2f_0
+2\tr(L^3)+2L^{ij}\Rb_{0i0j}\big]r^3 
\end{split}\\
\begin{split}
u^2(\Db u)^2=& (2f_0-H)^2 r^2
+\big[24f_0f_1+8f_0^3-12Hf_1-16Hf_0^2+6H^2f_0\\
&\qquad\qquad\qquad\quad -4|L|^2f_0 -4\Rb_{00}f_0
  +2H\Rb_{00}+2H|L|^2\big]r^3   
\end{split}\\
u^2\Rb |du|^2=& \Rb r^2 + \big[\Rb_{,0}+6\Rb f_0\big]r^3\\
u^3\Rb_{\al\be}u^{\al\be}=&\big[2\Rb_{00}f_0-L^{ij}\Rb_{ij}\big]r^3\\
u^3\Rb \Db u =&(2f_0-H)\Rb r^3.
\end{align*}
Combining the terms, \eqref{rewrite2} becomes 
\[
\begin{split}
o(r^3)=&\big[-18f_0-3H\big]r 
+\big[-18f_1-42f_0^2-17Hf_0-2|L|^2-H^2-3\Rb_{00}+\tfrac12 \Rb\big]r^2\\
&+\big[-12f_2-84f_0f_1+3\D f_0 -48 f_0^3-18Hf_1
  -38Hf_0^2-11|L|^2f_0-6H^2f_0 \\
&\qquad -15\Rb_{00}f_0+2\Rb f_0-\tfrac32 \Rb_{00,0} 
+\tfrac12 \Rb_{,0} -L^{ij}\Rb_{0i0j}-\tr
(L^3)-2H\Rb_{00}-2H|L|^2\\
&\qquad -L^{ij}\Rb_{ij} +\tfrac12 H\Rb\big]r^3. 
\end{split}
\]
Setting the coefficients successively to zero and solving, it follows that  
\begin{equation} \label{fs}  
\begin{split}
f_0 =& -\tfrac{1}{6}H,  \\
18f_1 =& -2 |\Lo|^2 - 3\Rb_{00} + \tfrac{1}{2}\Rb, \\  
24f_2 =&  - 3 \Rb_{00,0}+ \Rb_{,0}-\Delta H   
-2 L^{ij}\Rb_{i0j0}-2L^{ij}\Rb_{ij} - 2\tr (L^3)\\  
&+\tfrac{7}{3} H \Rb_{00} + \tfrac{1}{9} H \Rb
+ \tfrac{5}{9} H |\Lo|^2 + \tfrac{2}{9}H^3.    
\end{split}
\end{equation}

\begin{remark}
The fact that the two solutions $u_1$ (for the singular Yamabe
problem) and $u_2$ (for the singular $\sigma_2$-Yamabe problem)  
transform the same way under conformal change implies that the first
nonzero coefficient in the expansion of $u_1-u_2$ must be a conformal
invariant up to scale.  This is confirmed upon comparing \eqref{fs} with
the $n=3$ case of \eqref{fe0} and \eqref{fe1}:  the coefficients of  
$r$ and $r^2$ agree in the expansions of $u_1$ and $u_2$, and the  
coefficients of $r^3$ differ by a multiple of $|\Lo|^2$.  
\end{remark}

As before, we define $\cI$ by \eqref{Idef}, the 
integrand of the first boundary term in (\ref{epeq2}).  Using the  
expansions in (\ref{terms}), the formulas in (\ref{fs}), and the expansion
of the 
metric in \eqref{derivs} and \eqref{traces}, we obtain \eqref{Iexpand},
this time with  
\begin{equation}\label{Is2}
\begin{aligned}
\cI_1&=\tfrac12 H\\
\cI_2&= \tfrac{1}{3}|\Lo|^2 + \Rb_{00}-\tfrac13 \Rb\\
24\cI_3&= 9 \Rb_{00,0}-3 \Rb_{,0}-5 \Delta
H + 6 L^{ij}\Rb_{0i0j} - 18 L^{ij} \Rb_{ij}  + 6 \tr (L^3)
\\ 
&\ \ - 5 H\Rb_{00} + \tfrac{16}{3} H\Rb - \tfrac{37}{3} H|\Lo|^2     
- \tfrac{10}{9} H^3. 
\end{aligned}
\end{equation}
Using the expansion of the volume form (\ref{dvt}) together with
\eqref{integrand2} gives  
\[
\begin{aligned}
A_0&=1\\
A_1&=\cI_1+D_1\\
A_2&=\cI_2+\cI_1D_1+D_2\\
A_3&=\cI_3+\cI_2D_1+\cI_1D_2+D_3.
\end{aligned}
\]
Substituting the formulas from \eqref{Ds} and \eqref{Is2} and simplifying give
\begin{equation}\label{As}
\begin{aligned}
A_0&=1\\
A_1&=-\tfrac12 H\\
A_2&=  \tfrac{1}{2} \Rb_{00}-\tfrac13 \Rb - \tfrac{1}{6} |\Lo|^2 - \tfrac{1}{6} H^2\\
24A_3&= 5 \Rb_{00,0} -3 \Rb_{,0}- 5\Delta H  
- 2 L^{ij}\Rb_{0i0j} -18 L^{ij}\Rb_{ij}- 2 \tr (L^3)  \\ 
& \ \ - 23 H\Rb_{00} +\tfrac{40}{3} H\Rb - \tfrac{43}{3} H|\Lo|^2 
  + \tfrac{26}{9} H^3.  
\end{aligned}
\end{equation}
Consequently,
\begin{equation}\label{SB2}
\begin{split}
12(S-2A_3)=&-5\Rb_{00,0}+3\Rb_{,0}+5\Delta  H 
+ 26 L^{ij}\Rb_{0i0j}- 6 L^{ij} \Rb_{ij} + 18 \tr (L^3) \\ 
&\ \ - H\Rb_{00} - \tfrac{4}{3}H\Rb - \tfrac{29}{3} H|\Lo|^2  
  -\tfrac{26}{9}H^3.
\end{split}
\end{equation}
Using the contracted second Bianchi identity \eqref{bianfinal} and
rewriting in terms of the trace-free components of the intrinsic and
extrinsic curvatures give 
\[
\begin{split}
12(S-2A_3)=&\tfrac12 \Rb_{,0}- 5 \Lo_{ij,}{}^{ij}+\tfrac{25}{3}\Delta H
+ 26 \Lo^{ij}\Wb_{0i0j}+ 18 \tr (\Lo^3)\\
&+ 12\Lo^{ij}\Rb_{ij} + \tfrac{25}{3} H|\Lo|^2+ 3 H\Rb_{00}-\tfrac{5}{3} H\Rb
  -\tfrac{8}{9} H^3.
\end{split}
\]
Upon comparison with \eqref{B2}, one concludes that $S-2A_3=\cB_{\gb}^{\sigma_2}$
modulo divergence terms, as claimed. 
\end{proof}

\section{Renormalized volume coefficients and anomalies}\label{anomalies} 

In this section we discuss renormalized volume coefficients and identify
them explicitly for the singular Yamabe problem and the singular
$\sigma_2$-Yamabe problem when $n=3$.  We prove a general result to the
effect that the infinitesimal anomaly for the renormalized volume can be
written explicitly in terms of renormalized volume coefficients.  We also 
indicate how these identifications can be used to give alternate direct
proofs of some of the results of the previous two sections.          

Let $\gb$ be a smooth metric on $M$, let $u$ be a defining function for
$\pa M$ with an asymptotic expansion of the form \eqref{uexpand}, and set
$g=u^{-2}\gb$.  If $\gb$ is written in the geodesic normal form
\eqref{geoform}, then 
\[
dv_g=u^{-n-1}dv_{\gb}=u^{-n-1}\sqrt{\frac{\det h_r}{\det h_0}}\,drdv_{h_0}. 
\]
It follows that 
\begin{equation}\label{volformexp}
dv_g=r^{-n-1}\big[
v^{(0)}+v^{(1)}r+v^{(2)}r^2+\cdots +v^{(n)}r^n+O(r^{n+1}\log
r)\big]\,drdv_{h_0}
\end{equation}
for some functions $v^{(j)}\in C^\infty(\pa M)$ called the renormalized
volume coefficients for $g$ relative to $\gb$.  The $c_j$ and $\cE$ in
\eqref{sypvolexp} are then given by  
\begin{equation}\label{cE}
c_j=\frac{1}{n-j}\int_{\pa M}v^{(j)}\,dv_{h_0},\quad 0\leq j\leq n-1,
\qquad \cE=\int_{\pa M}v^{(n)}\,dv_{h_0}.
\end{equation}
See \cite{G3}, \cite{GoW} for further discussion.

Consider first the case where $g=u^{-2}\gb$ is a solution of the singular
Yamabe problem.  In this case, in \S\ref{calcs} we wrote
$u=r(1+r\varphi)$.  {From} \eqref{volformexp}, it  
follows that the renormalized volume coefficients are determined by:
\[
(1+r\varphi)^{-n-1}\sqrt{\frac{\det h_r}{\det h_0}}=
v^{(0)}+v^{(1)}r+v^{(2)}r^2+\cdots +v^{(n)}r^n+O(r^{n+1}\log r). 
\]
Now $v^{(0)}=1$ and $v^{(1)}$, $v^{(2)}$ are derived for general $n$ in
(4.5) of \cite{G3}.  Setting $n=3$ and rewriting $v^{(2)}$ using the Gauss 
equation ((4.3) of \cite{G3}), these become 
\begin{equation}\label{vs}
\begin{aligned}
v^{(0)}&=\hphantom{-}1 \\
v^{(1)}&=-\tfrac13 H\\
v^{(2)}&=-\tfrac19 \Rb +\tfrac16\Rb_{00}-\tfrac{1}{18} H^2
+\tfrac16 |\Lo|^2.  
\end{aligned}
\end{equation}
Using the expansions of $\varphi$ and $h_r$ derived in \S\ref{calcs}, we
calculated 
\begin{equation}\label{v3}
v^{(3)}=\tfrac23 \big(\tr (\Lo^3) +\Lo^{ij}\Wb_{0i0j}\big)
+\tfrac13 L_{ij,}{}^{ij}-\tfrac16 \Delta H,\qquad n=3. 
\end{equation}
In particular, the log term coefficient in \eqref{sypvolexp} is 
given by
\begin{equation}\label{E}
\cE=\frac23 \int_{\pa M} \big(\tr (\Lo^3)
+\Lo^{ij}\Wb_{0i0j}\big)\,dv_h,\qquad n=3.
\end{equation}

Equation \eqref{sypvolexp} expresses the expansion of the volume using the 
defining function $r$ for the exhaustion.  One may choose to use other 
defining functions.  The coefficients in the expansion of
$\Vol_g(\{u>\ep\})$ were calculated in \cite{GoW}.  It is a general fact
(see e.g. \cite{GoW}) that the log term coefficient is independent of the
defining function chosen for the exhaustion.  Indeed, \eqref{E} agrees with
the coefficient of the log term in the corresponding expansion (4.15) of
\cite{GoW}.      

In the proof of Theorem~\ref{main} in \S\ref{calcs}, the identities
\eqref{canceldivs} relating the coefficients $c_j$, $\cE$ in the volume 
expansion \eqref{sypvolexp} to the integrals of the $B_j$ were deduced via
cancellation of divergences in the Chern-Gauss-Bonnet formula.  Since the
$c_j$ and $\cE$ are given in terms of the renormalized volume coefficients
by \eqref{cE}, our identifications \eqref{vs}, \eqref{v3} of these 
coefficients can be used to give an alternate proof of \eqref{canceldivs}
by direct calculation.  

One further piece of information is needed to carry out such a direct
proof.  The last two equations of \eqref{canceldivs} involve the
coefficients $a$, $\cF$, which are determined by \eqref{intEexpand}.  These
can also be calculated directly.  Starting with the conformal
transformation law \eqref{Eform} of Ricci, we derived for general $n$: 
\begin{equation}\label{Eexpand}
\begin{aligned}
E_{ij}&=-(n-1)\Lo_{ij}r^{-1}
+(n-1)\big(\tfrac{1}{2n}H\Lo_{ij}-\Wb_{0i0j}+\Lo_{ik}\Lo^{k}{}_j
+\tfrac{1}{n-1}|\Lo|^2h_{ij}\big)+O(r)\\
E_{0i}&=O(1)\\
E_{00}&=O(1).
\end{aligned}
\end{equation}
Upon setting $n=3$ and expanding, one obtains
\begin{equation}\label{E2}
|E|^2_g\,dv_g=|E|^2_{\gb}\,dv_{\gb}
=\Big(4|\Lo|^2r^{-2}+8\big(\tr(\Lo^3)+\Lo^{ij}\Wb_{0i0j}\big)r^{-1}+O(1)\Big)  
drdv_{h_0}.
\end{equation}
Thus
\begin{equation}\label{aF}
\begin{aligned}
a&=4\int_{\pa M}|\Lo|^2\,dv_h\\
\cF&=8\int_{\pa M}\big(\tr (\Lo^3)+\Lo^{ij}\Wb_{0i0j}\big)\,dv_h.
\end{aligned}
\end{equation}

Equations \eqref{canceldivs} can now be verified upon comparing the above
formulas with \eqref{Bs} for the $B_j$.  Namely, substitute \eqref{Bs}  
into the integrals in \eqref{canceldivs}, substitute \eqref{cE} for the 
$c_j$ and $\cE$, with the $v^{(j)}$ given by \eqref{vs}, \eqref{v3}, 
substitute \eqref{aF} for $a$ and $\cF$, and compare.  

Next let $g=u^{-2}\gb$ be a solution of the singular $\sigma_2$-Yamabe  
problem.  The 
renormalized volume coefficients are again defined by \eqref{volformexp}
and the coefficients in the volume expansion \eqref{sypvolexp} 
are again given by \eqref{cE}.  We calculated the $v^{(j)}$ using the 
expansions of $u$ and $h_r$ derived in \S\ref{calcs2}, analogously to the
case $k=1$.  The results are:   
\begin{equation}\label{vs2}
\begin{aligned}
v^{(0)}&=\hphantom{-}1\\
v^{(1)}&=-\tfrac13 H\\
v^{(2)}&=-\tfrac19 \Rb +\tfrac16\Rb_{00}-\tfrac{1}{18} H^2
-\tfrac{1}{18}|\Lo|^2\\
v^{(3)}&=\hphantom{-}\tfrac13 L_{ij,}{}^{ij}-\tfrac16 \Delta H.   
\end{aligned}
\end{equation}
Since $v^{(3)}$ is a divergence, this provides an alternate proof of 
Theorem~\ref{nologs} by direct calculation.  Comparing with \eqref{As}
gives an independent verification of \eqref{canceldivs2}.     

We next discuss how the anomaly of the renormalized volume can be expressed
in terms of renormalized volume coefficients.  The anomaly of the
renormalized volume $V(g,\gb)$ of a solution of the  singular
$\sigma_k$-Yamabe problem is defined to be the left-hand side of
\eqref{anomalydef}.    
An anomaly is determined by its linearization (infinitesimal anomaly):  
$\pa_t|_{t=0} V(g,e^{2t\om} \gb)$.  
For Poincar\'e-Einstein metrics with $n$ even, the infinitesimal anomaly  
is $\int_{\pa M}v^{(n)}\om\,dv_h$; see \S 3 of \cite{G1}.  Since for the 
singular $\sigma_k$-Yamabe problem the rescaling occurs on a   
manifold-with-boundary, in this case normal derivatives of $\om$ also 
appear in the infinitesimal anomaly.  The  
following result shows that in this setting, the  
infinitesimal anomaly can be identified with the full set $(v^{(0)},\cdots,
v^{(n)})$ of renormalized volume coefficients.     

\begin{proposition}\label{anomalyprop}
Let $g$ be an asymptotically hyperbolic metric with renormalized  
volume coefficients $v^{(j)}$ determined by \eqref{volformexp} and
renormalized volume $V(g,\gb)$ determined by \eqref{sypvolexp} 
relative to a compactification $\gb$.  (It is not assumed that 
$g=r^{-2}\gb$.)  Let $\om\in C^{\infty}(M)$.  Then      
\begin{equation}\label{infanomaly}
\pa_t|_{t=0} V(g,e^{2t\om} \gb)=
\int_{\pa M}\Bigg(\sum_{j=0}^n
\frac{v^{(n-j)}}{(j+1)!}\,\pa_r^j\om\Bigg)\,dv_h.   
\end{equation}
\end{proposition}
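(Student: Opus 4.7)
The plan is to compute how the exhaustion $\{r_t>\ep\}$ deforms under the conformal rescaling of $\gb$, express the first variation of the truncated volume as a boundary integral, and then extract its finite part in $\ep$.

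First I would set $\gb_t = e^{2t\om}\gb$, let $r_t$ denote the $\gb_t$-distance to $\pa M$, and work in $\gb$-geodesic normal coordinates $(r,y)$ on a collar $\{r<r_0\}$ where $\gb = dr^2+h_r$.  Writing $r_t = r + t\phi(r,y) + O(t^2)$ and using $\gb_t^{-1}=e^{-2t\om}\gb^{-1}$, the eikonal equation $|dr_t|^2_{\gb_t}=1$ gives $\pa_r\phi = \om$ at first order in $t$; combined with $\phi|_{r=0}=0$ (from $r_t|_{\pa M}=0$), this yields
\[
\phi(r,y) = \int_0^r \om(r',y)\,dr'.
\]

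Next I would split $\Vol_g(\{r_t>\ep\})$ into a collar integral plus a bulk term $\Vol_g(\{r\geq r_0\})$; for $\ep,|t|$ small the bulk is independent of $\ep$ and $t$, and in the collar the hypersurface $\{r_t=\ep\}$ is parameterized as $\{r=R(y,\ep,t)\}$ with $R = \ep - t\phi(\ep,y) + O(t^2)$.  Inserting \eqref{volformexp} and differentiating in $t$ at $t=0$ eliminates the bulk and yields
\[
\pa_t|_{t=0}\Vol_g(\{r_t>\ep\}) = \int_{\pa M}\phi(\ep,y)\,\ep^{-n-1}\Bigl(\sum_{j=0}^n v^{(j)}(y)\,\ep^j + O(\ep^{n+1}\log\ep)\Bigr)\,dv_h.
\]
Taylor expanding $\om$ at $r=0$ gives $\phi(\ep,y) = \sum_{k\geq 0}\tfrac{\pa_r^k\om|_{r=0}}{(k+1)!}\ep^{k+1}$; multiplying the two series and reading off the $\ep^0$ coefficient (contributed by pairs with $j+k=n$, the log remainder contributing only $O(\ep\log\ep)$) produces exactly the integrand on the right-hand side of \eqref{infanomaly}.

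The main technical obstacle is the interchange of $\text{fp}_\ep$ with $\pa_t|_{t=0}$: one needs to know that $\pa_t|_{t=0}V(g,\gb_t)$ equals the $\ep^0$ coefficient of $\pa_t|_{t=0}\Vol_g(\{r_t>\ep\})$.  I would handle this by observing that the coefficients in the asymptotic expansion of $\Vol_g(\{r_t>\ep\})$ in powers of $\ep$ and $\log\ep$ depend smoothly on $t$ near $t=0$, a consequence of the smooth $t$-dependence of $r_t$ and the uniformity in $t$ of the polyhomogeneous structure of $dv_g$ when written in $\gb_t$-normal coordinates; this legitimizes termwise differentiation and hence commutation of $\text{fp}_\ep$ with $\pa_t$.
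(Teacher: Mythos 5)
Your proposal is correct and follows essentially the same route as the paper: the eikonal equation $|dr_t|^2_{\gb_t}=1$ determines the first-order shift of the distance function (your $\phi=\int_0^r\om$ is exactly the linearization of the paper's $r_t=e^{\Up_t}r$, $r=sb_t(x,s)$), and pairing its Taylor coefficients against the volume form expansion \eqref{volformexp} yields the constant term. The only cosmetic difference is order of operations: the paper computes the finite-$t$ difference $V(g,\gb_t)-V(g,\gb)$ explicitly in terms of $b_t$ before differentiating, which renders the smooth $t$-dependence of the $\ep$-expansion coefficients manifest and thereby supplies exactly the justification you invoke for commuting $\mathrm{fp}_\ep$ with $\pa_t|_{t=0}$.
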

\begin{proof}
The proof follows the same outline as in \cite{GrW}, \cite{G1}, \cite{G3}.
Set $\gb_t=e^{2t\om}\gb$ and let $r_t$ denote the distance to the boundary
with respect to $\gb_t$.  Then 
$r_t=e^{\Up_t}r$ for a smooth function $\Up_t$ on $M$.  
Use the normal exponential map of $\gb$ to identify $M$ near $\pa M$
with $[0,\de)_r\times \pa M$ as above.  For fixed $x\in \pa M$ and $t>0$,
we can solve the relation 
$s=e^{\Up_t(x,r)}r$ for $r$ as a function of $s$:  $r=sb_t(x,s)$, 
where $b_t(x,s)$ is a smooth nonvanishing function.  Set $\ep_t(x,\ep) =
\ep b_t(x,\ep)$.  Then $r_t>\ep$ is equivalent to $r>\ep_t(x,\ep)$.  {From}  
\eqref{volformexp} it follows that
\[
\begin{split}
\operatorname{Vol}_g&(\{r_t>\ep\})-\operatorname{Vol}_g(\{r>\ep\}) 
=
\int_{r_t>\ep}dv_g - \int_{r >\ep}dv_g \\
=&\int_{\pa M}\int_{\ep_t}^{\ep}
\sum_{0\leq j\leq n} v^{(j)}(x) r^{-n-1+j}dr dv_{h_0} + o(1)\\
=&
\sum_{0\leq j\leq n-1}\ep^{-n+j}
\int_{\pa M}\frac{v^{(j)}(x)}{n-j}\left(b_t(x,\ep)^{-n+j}-1 
\right)dv_{h_0}\\
&  - \int_{\pa M} v^{(n)}(x)\log b_t(x,\ep)\,dv_{h_0}   
+o(1). 
\end{split}
\]
Now $V(g,\gb_t)-V(g,\gb)$ is the constant term in the expansion in $\ep$ of 
this expression, so
\begin{equation}\label{Vdiff} 
\begin{split}
V(g,\gb_t)-V(g,\gb)=&
\sum_{0\leq j\leq n-1}
\int_{\pa M}\frac{v^{(j)}(x)}{(n-j)(n-j)!}
\pa_{\ep}^{n-j}\big(b_t(x,\ep)^{-n+j}\big)\big|_{\ep=0}\,dv_{h_0} \\
& - \int_{\pa M} v^{(n)}(x)\log b_t(x,0)\,dv_{h_0}.
\end{split}
\end{equation}
Thus to evaluate $\pa_t|_{t=0} V(g,\gb_t)$, we need to identify the Taylor 
expansion in $\ep$ of $b_t(x,\ep)$ to first order in $t$.  

First consider $\Up_t$.  Since $\Up_0=0$, we have $\Up_t=O(t)$.  (Here and
in the sequel, $O(t^l)$ means $t^l$ times a smooth function of $t$ and the
other variables.)  Now $\Up_t$
is determined by the equation $|dr_t|^2_{\gb_t}=1$, which can be written 
\begin{equation}\label{Up1}
2r\pa_r\Up_t+r^2|d\Up_t|^2_{\gb}=e^{2(t\om-\Up_t)}-1.
\end{equation}
So
\begin{equation}\label{Up2}
r\pa_r\Up_t=(t\om-\Up_t)+O(t^2).
\end{equation}
Equation \eqref{Up1} implies $\Up_t=t\om$ at $r=0$.  Differentiating 
\eqref{Up2} successively at $r=0$ gives for the Taylor expansion of $\Up_t$
in $r$:  
\[
(j+1)\pa_r^j\Up_t|_{r=0}=t\,\pa_r^j \om|_{r=0} +O(t^2),\qquad j\geq 1. 
\]

Next consider $b_t$.  The defining relations of $\Up_t$, $b_t$ and $\ep_t$
show that 
\begin{equation}\label{bUp}
b_t(x,\ep)e^{\Up_t(x,\ep_t(x,\ep))}=1.
\end{equation}
Evaluating at $\ep=0$ gives $\log b_t(x,0)=-\Up_t(x,0)=-t\om(x,0)$.
Equation \eqref{bUp} implies $b_t(x,\ep)=1-\Up_t(x,\ep_t)+O(t^2)$.  
Since $\pa_\ep\ep_t=1+O(t)$, differentiating and applying the
chain rule successively give $\pa_\ep^j b_t(x,\ep)=-(\pa_r^j
\Up_t)(x,\ep_t)+O(t^2)$.  Thus 
\[
\pa_\ep^j b_t|_{\ep=0}=-\frac{t}{j+1}\pa_r^j\om|_{r=0}+O(t^2),\qquad j\geq
1. 
\]
Now $\pa_\ep(b_t^{-l})=-lb_t^{-l-1}\pa_\ep b_t=-l\pa_\ep b_t+O(t^2)$.
Differentiating further gives 
$\pa_\ep^j(b_t^{-l})=-l\pa_\ep^j b_t+O(t^2)$, so in particular   
\[
\pa_\ep^{n-j}(b_t^{-n+j})|_{\ep=0}=-(n-j)\pa_\ep^{n-j}b_t|_{\ep=0}+O(t^2)  
=t\frac{n-j}{n-j+1}\pa_r^{n-j}\om|_{r=0}+O(t^2).
\]
Substituting into \eqref{Vdiff} and applying $\pa_t|_{t=0}$ give 
\eqref{infanomaly}.
\end{proof}

One way to view Corollary~\ref{Vtinvariance} is that it asserts that in the     
umbilic case for $k=1$ and $n=3$, the functional $V(g,\gb)$ has the
property that its anomaly agrees with the anomaly of a functional given by
integration of a local curvature expression.  Likewise, Theorem~\ref{main2}  
implies the corresponding statement for the general case when $k=2$ and  
$n=3$.  This is an unusual property of a functional arising from a global 
construction.  Since the functionals arising as integrals of local
expressions are invariant under constant rescalings of $\gb$, the 
vanishing of the anomaly for $\om = constant$ is a necessary condition for
a global functional to have anomaly equal to the anomaly of the integral of
a local expression.  Via this condition, one can 
easily see, for instance, that the anomaly for the renormalized volume of    
Poincar\'e-Einstein metrics with $n$ even and the anomaly for functional
determinants of natural differential operators do not agree with the 
anomaly of the integral of a local expression.  (In these examples, the
conformal rescaling occurs on a closed manifold, rather than on a 
manifold-with-boundary as in the singular $\sigma_k$-Yamabe problem.)    

Under a rescaling $\gbh=e^{2\om}\gb$ with constant $\om$, the distance
function transforms by $\widehat{r}=e^{\om}r$, so the renormalized 
volume for a solution of the singular $\sigma_k$-Yamabe problem transforms
by $V(g,\gbh)=V(g,\gb)+\cE\om$.  Consequently, $\cE=0$ is a necessary
condition for the anomaly to equal the anomaly of the integral of a local
expression.  That $\cE^{\sigma_2}=0$ for $n=3$ is  
Theorem~\ref{nologs}, and for $k=1$, $n=3$ and $\gb$ umbilic, $\cE=0$
follows from \eqref{E}.        

Proposition~\ref{anomalyprop} can be used to give alternate proofs by
direct calculation of Corollary~\ref{Vtinvariance} and of the conformal
invariance of $\Vt^{\sigma_2}(g,\gb)$ in Theorem~\ref{main2}.  For $k=1$ it
can be used more generally to give a direct proof of the infinitesimal  
conformal invariance of \eqref{combination} 
in Theorem~\ref{main}, from which its full conformal invariance is a
consequence.
Among other things, this verifies the numerical coefficients in \eqref{B}, 
\eqref{B2} (although this computation does not test the coefficients of
$\tr(\Lo)^3$ or $\Lo^{ij}\Wb_{0i0j}$, since these are pointwise conformally
invariant).  To calculate the invariance of \eqref{combination}, one also 
needs to know the infinitesimal anomaly of ${\text {fp}} 
\int_{r>\ep}|E|_g^2\,dv_g$.  This can be  
derived in the same way as in Proposition~\ref{anomalyprop}.  The argument 
is the same except that one uses \eqref{E2} instead of \eqref{volformexp},
and it gives the same result with the $v^{(n-j)}$ in \eqref{infanomaly}
replaced by the coefficients in 
\eqref{E2}:
\begin{equation}\label{Eanomaly}
\pa_t|_{t=0}\left(\text{fp} \int_{r_t>\ep}|E|_g^2\,dv_g\right)  
=\int_{\pa M}\Big[2|\Lo|^2\pa_r\om 
+8\big(\tr(\Lo^3)+\Lo^{ij}\Wb_{0i0j}\big)\om\Big]\,dv_h.  
\end{equation}
The infinitesimal conformal change of $\cB_{\gb}$ and
$\cB_{\gb}^{\sigma_2}$ can be calculated term-by-term via usual conformal
transformation laws of local curvature quantites.  Combining with
\eqref{infanomaly}, \eqref{Eanomaly} for the infinitesimal anomalies 
and substituting 
\eqref{vs}, \eqref{v3}, \eqref{vs2} for the renormalized volume
coefficients which appear, one verifies that the infinitesimal anomalies 
vanish for \eqref{combination} for $k=1$, and for 
$\Vt^{\sigma_2}(g)=V(g,\gb)+\frac16 \int_{\pa M}\cB_{\gb}^{\sigma_2}
\,dv_h$ for $k=2$.  Consequently these expressions are conformally
invariant.  The computations are straightforward but tedious, so are
omitted.

\section{Related problems}\label{related}

In \cite{GSW}, Gursky-Streets-Warren studied the singular
$\sigma_k(\Ric)$-problem: given a Riemannian manifold-with-boundary
$(M^{n+1},\pa M,\gb)$, 
find a defining function $u$ so that $g=u^{-2}\gb$ satisfies
\begin{equation}\label{singRic}
\sigma_k(-g^{-1}\Ric_g)=n^k\binom{n+1}{k}.  
\end{equation}
The constant is again the value on hyperbolic space.  They showed   
that, just as for the singular Yamabe  
problem (and unlike the singular $\sigma_k$-Yamabe problem), there is
always a unique solution.  In \cite{W}, Wang studied the 
asymptotics of $u$ for domains in Euclidean space, and, among other things, 
showed that the indicial roots are once again $0$ and $n+2$.  These are the
indicial roots for the equation \eqref{singRic} on a general
manifold-with-boundary as well.  Thus the formal asymptotic expansion    
of $u$ again takes the form \eqref{uexpand} and the volume expansion has
the same form \eqref{sypvolexp}.  

When $n=3$, exactly the same arguments as in the case $k=1$ above can be 
used to prove a Chern-Gauss-Bonnet Theorem for solutions to the singular 
$\sigma_2(\Ric)$-problem.  The relevant identity replacing \eqref{sig2} is  
\[
4\sigma_2(g^{-1}P_g)=\tfrac19 \sigma_2(g^{-1}\Ric_g)-\tfrac49 |E_g|^2.   
\]
The same proof shows that there is a boundary term  
$\cB_{\gb}^{\sigma_2(\Ric)}$ so that if $g=u^{-2}\gb$ solves
\eqref{singRic}, then 
\[
8\pi^2\chi(M)  = \frac14 \int_M|W|^2\,dv_g
- \frac49 \text{ fp} \int_{r>\ep}|E|^2\,dv_g +6V(g,\gb)
+\int_{\pa M}\cB_{\gb}^{\sigma_2(\Ric)} \,dv_h. 
\]
Since the constant multiplying $\int |E|^2$ is again negative, the
subsequent conclusions about the specialization to the umbilic case 
also hold, including the variational characterization of
Poincar\'e-Einstein metrics based on Proposition~\ref{PEineq}.

We conclude with a discussion of another variant of the singular 
$\sigma_k$-Yamabe problem, this one motivated by Theorem~\ref{nologs}.
Recall that in \cite{G3} and \cite{GoW} it was proved that for the 
singular Yamabe problem, the variation of $\cE$ is a nonzero multiple of
$\cL$, where $\cE$ is viewed as a conformally invariant energy of the
varying hypersurface $\Sigma=\pa M$ in the conformal manifold  
$(M,[\gb])$.  Otherwise stated, $\cL=0$ is the Euler-Lagrange equation for
the functional $\cE$, thought of as a function of $\Si$.  If this
variational relation were true also for $k=2$ and   
$n=3$, we could conclude from Theorem~\ref{nologs} that $\cL^{\sigma_2}=0$,  
i.e. the expansion of the solution $u$ has no log terms.       
This also raises the question of whether this variational
relation between $\cE$ and $\cL$ holds more generally.

A crucial ingredient in our analysis of the Chern-Gauss-Bonnet formula
was the divergence identity \eqref{Pdiv}.  Divergence structure also
plays an important role in the proof for the singular Yamabe problem of the  
variational relation between $\cE$ and $\cL$.  It has been known for some
time that such divergence structure is lacking in general for
$\sigma_k(g^{-1}P)$  for $k>2$, and Branson-Gover showed in \cite{BG}
that the equation $\sigma_k(g^{-1}P)=const$ for $k>2$ is an Euler-Lagrange
equation if and only if $g$ is locally conformally flat.  
However, Chang-Fang realized in \cite{CF} that a modification of  
$\sigma_k(g^{-1}P)$, which we denote $v_k(g)$, has a variational/divergence 
structure, leading to the conclusion that at least for some purposes, 
$v_k(g)$ is the correct replacement for $\sigma_k(g^{-1}P)$ for $k>2$.
If $k=1$ or $2$, or if $3\leq k\leq n$ and $g$ is locally conformally flat,
then $v_k(g)=\sigma_k(g^{-1}P)$.  These observations motivate us to   
consider the ``singular $v_k$-Yamabe problem'':  given 
$(M^{n+1},\pa M,\gb)$, find a defining function $u$ so that $g=u^{-2}\gb$
satisfies
\begin{equation}\label{vkeq}
v_k(g)=(-2)^{-k}\binom{n+1}{k}.
\end{equation}

We briefly recall the definition of $v_k(g)$ and refer to \cite{CF},
\cite{G2} for details and elaboration.  Consider the formal asymptotics of
Poincar\'e-Einstein metrics:  if $g$ is a metric on a manifold $M^n$, one
searches for a metric $g_+=r^{-2}(dr^2 +g_r)$ which satisfies 
$\Ric(g_+)=-ng_+$ to high order, where $g_r$ is a one-parameter family of
metrics on $M$ with $g_0=g$.  For $n$ even, this determines the Taylor
expansion of $g_r$ to order $n$, and for $n$ odd, this, together with the 
condition that $g_r$ be even in $r$, determines the Taylor expansion of
$g_r$ to infinite order.  Then one considers the expansion of the volume
form $dv_{g_r}=\big(1+v^{(2)}r^2+\ldots\big)dv_g$.
The curvature quantities $v_k$ are a multiple of the renormalized 
volume coefficients appearing in this expansion:  $v_k=(-2)^k v^{(2k)}$.
So $v_k$ is defined for all $k\geq 0$ for $n$ odd, but only for $k\leq n/2$
for $n$ even for general metrics.  However, if $g$ is Einstein or locally
conformally flat, it is possible to continue the expansion of
$g_r$ to infinite order, uniquely upon imposing an appropriate auxiliary
condition, so in these cases $v_k(g)$ is defined for all $k$ also for $n$
even.  It turns out that $v_k(g)=\sigma_k(g^{-1}P_g)$ if $k=1$ or $2$, and 
also for $3\leq k\leq n$ if $g$ is locally conformally flat or Einstein.  

For the singular $v_k$-Yamabe problem, we begin with $(M^{n+1},\pa M,\gb)$,
and $g=u^{-2}\gb$ is supposed 
to satisfy \eqref{vkeq}.  So we must replace $n$ by $n+1$ in the above
definition of $v_k(g)$.  We always assume $k\leq n+1$, and also require
$2k\leq n+1$ if $n$ is odd and $g$ is not locally conformally flat.  The
indicial roots of the equation \eqref{vkeq},  
viewed as an equation for $u$, are again $0$ and $n+2$.  So once again, the
formal expansion of $u$ has the form \eqref{uexpand} and the volume
expansion has the form \eqref{sypvolexp}.  Just as for the problems
discussed above, the coefficients $\cE^{v_k}$ and $\cL^{v_k}$ are
determined by formal calculations alone, so are well-defined independently
of existence theory for the equation.  And both of them satisfy the same
conformal invariance relations as before:  under conformal change
$\gbh=\Om^2\gb$, one has $\widehat{\cE^{v_k}}=\cE^{v_k}$ and  
$\widehat{\cL^{v_k}}=\big(\Om|_{\Si}\big)^{-n-1}\cL^{v_k}$.  

The first result is a generalization of Theorem~\ref{nologs} to the
singular $v_k$-Yamabe problem:  
\begin{theorem}\label{Ezero}
If $n\geq 3$ is odd and $2k=n+1$, then $\cE^{v_k}=0$.      
\end{theorem}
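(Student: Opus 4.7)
My approach will parallel the proof of Theorem~\ref{nologs}, using in place of the dimension-four divergence identity of Lemma~\ref{s2eqn} a conformal primitive for $v_k$ that is available because $2k = n+1$ is the critical dimension for $v_k$.

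The key ingredient is that in the critical dimension $2k$ the functional $\gb\mapsto \int_M v_k(\gb)\,dv_{\gb}$ is conformally invariant on closed manifolds; equivalently, the pointwise conformal change of $v_k\,dv$ is a divergence. Specifically, for $g = e^{2\omega}\gb$ on the interior of $M$,
\[
v_k(g)\,dv_g - v_k(\gb)\,dv_{\gb}
= \nab^{\al}\Theta_{\al}(\omega,\gb)\,dv_{\gb},
\]
where $\Theta$ is a local one-form polynomial in derivatives of $\omega$ and in curvature quantities of $\gb$. Crucially, $\Theta$ depends only on derivatives of $\omega$, not on $\omega$ itself, because the critical-dimension GJMS operator $P_{2k}$ satisfies $P_{2k}(1)=0$ and hence has no zeroth-order term. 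This is essentially a $Q$-curvature Polyakov primitive, and relevant variants are treated in \cite{CF}.

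The plan is then to apply this identity to $g = u^{-2}\gb$, so that $\omega = -\log u$, integrate over the exhaustion $\{r > \ep\}$, and use $v_k(g) = (-2)^{-k}\binom{n+1}{k}$ together with Stokes' theorem to obtain
\[
(-2)^{-k}\binom{n+1}{k}\,\Vol_g(\{r > \ep\}) - \int_{r > \ep} v_k(\gb)\,dv_{\gb}
= -\int_{r = \ep}\Theta_{0}(-\log u,\gb)\,dv_{h_\ep}.
\]
The left side contains the log term $(-2)^{-k}\binom{n+1}{k}\,\cE^{v_k}\log(1/\ep)$ coming from \eqref{sypvolexp}, while $\int_{r>\ep} v_k(\gb)\,dv_{\gb}$ converges to $\int_M v_k(\gb)\,dv_{\gb}$ with no log contribution. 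The remaining check is that the $\ep$-expansion of the boundary integral on the right has no $\log\ep$ term at order $\ep^0$. This will hold because each derivative of $-\log u$ has a Laurent expansion in $r$ consisting of integer powers of $r$ plus, from the $\cL r^{n+2}\log r$ term in \eqref{uexpand}, log contributions only at strictly positive orders of $r$ after at most $n$ differentiations; polynomial expressions in such derivatives, times smooth coefficients from $\gb$, then contain $\log\ep$ terms only at strictly positive orders of $\ep$. Matching $\log(1/\ep)$ coefficients forces $\cE^{v_k}=0$.

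The principal obstacle will be rigorously producing the primitive $\Theta$ with only derivative dependence on $\omega$ for the full (not merely infinitesimal) conformal change $\gb \rightsquigarrow u^{-2}\gb$, given that $\omega = -\log u$ is unbounded at the boundary. One route is to integrate the infinitesimal conformal variation $P_{2k}^{\gb_t}(\omega)\,dv_{\gb_t}$ along the one-parameter family $\gb_t = e^{2t\omega}\gb$; because $P_{2k}(1)=0$ in every metric along the family, each step of the Green identity yields only derivative dependence on the variation, and the accumulated primitive inherits the required structure.
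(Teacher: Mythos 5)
The paper does not actually prove Theorem~\ref{Ezero}: its proof, along with those of Theorems~\ref{var} and \ref{higherCGB}, is explicitly deferred to a future paper. So your proposal can only be measured against the analogous argument for Theorem~\ref{nologs}, and measured that way your strategy is the natural generalization and almost certainly the intended one: replace the dimension-four identity of Lemma~\ref{s2eqn} by the divergence structure of $v_k$ in the critical dimension $2k=n+1$, integrate over $\{r>\ep\}$, convert $v_k(g)=(-2)^{-k}\binom{n+1}{k}$ into a multiple of $\Vol_g(\{r>\ep\})$, and observe that the boundary integral produced by Stokes' theorem has no $\log\ep$ term at order $\ep^0$. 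Your order-counting for that last point is correct: since the log in \eqref{uexpand} enters at order $r^{n+2}\log r$, the derivative $\pa_r^j(-\log u)$ acquires its first log term at order $r^{n+1-j}\log r$, so a monomial of total differential order at most $n$ in $\om$ picks up log corrections only at relative order $r^{n+1}\log r$, i.e.\ at absolute order at least $\ep\log\ep$ --- exactly parallel to the remark following \eqref{integrand2}.

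The one place where the argument as written does not hold up is the justification of the key identity $v_k(g)\,dv_g-v_k(\gb)\,dv_{\gb}=\nab^\al\Theta_\al\,dv_{\gb}$ with $\Theta$ free of undifferentiated $\om$. You attribute this to the critical GJMS operator satisfying $P_{2k}(1)=0$; but $v_k$ is not the $Q$-curvature, and its conformal variation is not governed by $P_{2k}$, so this reasoning proves nothing as stated. The correct source is the first conformal variation formula for renormalized volume coefficients (Chang--Fang \cite{CF}; see also \cite{G2}): the variation of $v^{(2k)}$ equals $-2k\,\om\,v^{(2k)}$ plus the divergence of a local expression linear in $d\om$, which in dimension $n+1=2k$ makes $\pa_t\big(v_k\,dv\big)\big|_{t=0}$ the divergence of a local vector field depending only on derivatives of $\om$. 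The absence of undifferentiated $\om$ --- which, as you rightly emphasize, is essential, since $\om=-\log u$ would otherwise inject a $\log\ep$ at order $\ep^0$ and the matching would only bound $\cE^{v_k}$ rather than kill it --- reflects the weight-zero scaling of $v_k\,dv$ in the critical dimension (invariance under $\om\mapsto\om+c$ for constant $c$), not any property of GJMS operators. The finite primitive is then obtained by integrating along $\gb_t=e^{2t\om}\gb$ as you propose; note that for $k=2$, $n=3$ this reproduces exactly the vector field in \eqref{Pdiv} after setting $\om=-\log u$. With the mechanism for the key lemma corrected in this way, the proof goes through.
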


The second result is a generalization to $k>1$ of the variational 
relationship between $\cE$ and $\cL$.  To formulate this result, note that
$\cE^{v_k}$ and $\cL^{v_k}$ are determined just by the local geometry of
$\pa M$ in $(M,\gb)$, so they can be defined for a general hypersurface 
$\Si$ with chosen normal direction in a Riemannian manifold $(M^{n+1},\gb)$
(it must be assumed for $\cE^{v_k}$ that $\Si$ is compact to carry out the
integration).  Suppose that  
$F_t:\Si\rightarrow M$, $0\leq t\leq \delta$, is a variation of $\Si$,  
i.e. a smoothly varying one-parameter family of embeddings with
$F_0=\text{Id}$.  Set $\Si_t=F_t(\Si)$ and denote by $\cE^{v_k}_t$ the 
corresponding quantity for $\Si_t$.  Write 
$\dot{F}=\pa_tF|_{t=0}\in \Gamma(TM|_{\Si})$ and
$\dot{\cE^{v_k}}=\pa_t\cE^{v_k}_t|_{t=0}$.  Let $\nub$ denote the inward
pointing  $\gb$-unit normal to $\Si$ in $M$.  
\begin{theorem}\label{var}
Suppose $n\geq 2$ and $1\leq k\leq n+1$.  Suppose also that $2k\leq n+1$ if 
$n$ is odd and $\gb$ is not locally conformally flat.  Then    
\[
\dot{\cE^{v_k}}
=(n+2)(n-2k+1)\int_\Si\;
\langle \dot{F},\nub\rangle_{\gb}\,\cL^{v_k}\,dv_\Si.      
\]
\end{theorem}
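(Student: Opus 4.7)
The plan is to extend to higher $k$ the argument developed for $k=1$ in \cite{G3} and \cite{GoW}, exploiting the divergence/variational structure of $v_k(g)$ identified by Chang-Fang \cite{CF} (which is precisely what fails for $\sigma_k(g^{-1}P)$ when $k\geq 3$ on general metrics and which motivates replacing $\sigma_k(g^{-1}P)$ by $v_k$ in the first place). First I would reduce to normal variations: since $\cE^{v_k}$ and $\cL^{v_k}$ depend only on $\Si$ as a cooriented submanifold of $(M,\gb)$, the tangential part of $\dot F$ contributes nothing to $\dot{\cE^{v_k}}$, so one may take $\dot F = f\,\nub$ with $f\in C^\infty(\Si)$. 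Next I would work in geodesic normal coordinates $\gb = dr^2 + h_r$ near $\Si$ and realize the deformed hypersurface as $\Si_t = \{r = tf\}$, with $O(t^2)$ corrections dropping out of the first-order computation.

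For each small $t$, let $u_t$ denote the formal solution of \eqref{vkeq} with boundary $\Si_t$, with expansion \eqref{uexpand} relative to the $\gb$-distance to $\Si_t$. Reading off renormalized volume coefficients $v^{(j)}_t$ from $dv_g = u_t^{-n-1}dv_{\gb}$ as in \eqref{volformexp}, and using \eqref{cE}, one has $\cE^{v_k}_t = \int_{\Si_t} v^{(n)}_t\,dv_{h_{tf}}$. The heart of the proof is the evaluation of $\pa_t|_{t=0}\cE^{v_k}_t$. Passing to the shifted variable $r' = r - tf$, the formal recursion satisfied by $u_t$ is the same as that for $u$, but the substitution $r = r' + tf$ converts the log term $\cL^{v_k}\,r^{n+2}\log r$ in $u$ into a mixture of lower-order log and polynomial contributions in $r'$, the polynomial piece being reabsorbable into the undetermined coefficient $u^{(n+1)}$ by the indicial-root structure of \eqref{vkeq}. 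Propagating this conversion down to the order $r^n$ of $v^{(n)}$, and invoking the Chang-Fang variational identity for $v_k$ to collapse all remaining contributions into exact divergences on $\Si$, should yield $\dot{\cE^{v_k}} = C_{n,k}\int_\Si f\,\cL^{v_k}\,dv_\Si$ for some universal constant $C_{n,k}$.

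The main obstacle will be pinning down $C_{n,k}$ as $(n+2)(n-2k+1)$. The factor $n+2$ is the indicial root and emerges naturally from $\pa_t|_{t=0}(r-tf)^{n+2}\log(r-tf)$; the factor $n-2k+1$ encodes the conformal weight $-2k$ of $v_k(g)$ interacting with the defining-function rescaling implicit in the boundary shift, and so requires a careful bookkeeping of the variational derivative of $v_k$ along the family $u_t$. A clean way to isolate the constant without grinding through the general recursion is to test the identity against a single explicit model, for example a planar graph over the flat hyperplane in hyperbolic half-space, where every coefficient in the formal expansion is computable in closed form; linearity in $f$ and locality of $\cE^{v_k}$ and $\cL^{v_k}$ then promote the identity to the general case. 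The hypothesis $2k \leq n+1$ when $n$ is odd and $\gb$ is not locally conformally flat is precisely the range in which $v_k(g)$ is intrinsically defined without auxiliary Einstein choices, which is what makes the variational computation unambiguous and guarantees that $C_{n,k}$ does not depend on further data. As a consistency check, specializing to $k=1$ must recover the known formula of \cite{G3}, \cite{GoW} and to $k = (n+1)/2$ must give zero, consistent with Theorem~\ref{Ezero}.
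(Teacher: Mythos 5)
The paper does not actually prove Theorem~\ref{var}: it states explicitly that ``Proofs of Theorems~\ref{Ezero}, \ref{var}, and \ref{higherCGB} will be given elsewhere,'' so there is no in-paper argument to compare against. Judged on its own, your proposal is a reasonable outline of the strategy one would expect (mimicking the $k=1$ argument of \cite{G3}, \cite{GoW}), but it has genuine gaps at exactly the points where the work lies. The central step --- showing that the first variation of $\cE^{v_k}_t$ localizes onto the log coefficient, i.e.\ that the variations of all the formally determined coefficients $u^{(2)},\dots,u^{(n+1)}$ and of the $v^{(j)}_t$ for $j<n$ contribute only exact divergences on $\Si$ --- is asserted by ``invoking the Chang-Fang variational identity for $v_k$ to collapse all remaining contributions into exact divergences,'' but no such identity is specified, and the Chang-Fang result concerns the conformal variation of $\int v_k\,dv_g$ on a closed manifold, not the variation of a hypersurface inside a fixed conformal manifold with the defining function re-solved at each $t$. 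For $k=1$ this step rests on the specific divergence structure of the scalar curvature equation; for general $v_k$ the analogous structure (presumably the variational characterization of $v_k$ via the $w_k$/divergence identities of \cite{CF}, \cite{G2}) has to be exhibited and threaded through the formal recursion, and that is precisely what your sketch omits. Relatedly, the substitution $r=r'+tf$ does not give the $\gb$-distance to $\Si_t$ even to first order uniformly in $r$ (the foot of the normal moves), so the bookkeeping of how the $r^{n+2}\log r$ term feeds into $v^{(n)}_t$ needs more care than ``the polynomial piece is reabsorbable into $u^{(n+1)}$.''

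The second gap is the determination of the constant $(n+2)(n-2k+1)$. Your plan to ``test against a single explicit model'' only works after you have already proved that $\dot{\cE^{v_k}}=C_{n,k}\int_\Si f\,\cL^{v_k}\,dv_\Si$ for a universal constant --- which is the hard structural statement above --- and even then you must produce a configuration with $\cL^{v_k}\not\equiv 0$ and compute both sides in closed form, which you do not do. In particular, nothing in the proposal actually produces the factor $n-2k+1$; the heuristic that it ``encodes the conformal weight $-2k$ of $v_k$'' is not a derivation, and getting this factor right is essential since its vanishing at $2k=n+1$ is the whole point of the theorem in relation to Theorem~\ref{nologs} and Theorem~\ref{Ezero}. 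As it stands the proposal is a plausible program, not a proof.
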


\noindent
Thus the coefficient relating $\dot{\cE^{v_k}}$ and $\cL^{v_k}$ vanishes
when $2k=n+1$, and in this case one can make no conclusions about
$\cL^{v_k}$ from the fact that $\cE^{v_k}=0$.   In particular, when $k=2$,
$n=3$, there is no conclusion about $\cL^{\si_2}$ from the fact that 
$\cE^{\si_2}=0$.  

Finally, we state a version of the Chern-Gauss-Bonnet Theorem in higher  
dimensions for solutions of the singular $v_k$-Yamabe problem, $2k=n+1$.
This is 
motivated by Theorem~\ref{main2} above and by Theorem 3.3 in \cite{CQY},
which generalizes Anderson's formula to Poincar\'e-Einstein metrics in
higher even dimensions.  For this  
result, we assume that our solution of the $v_k$-Yamabe problem is 
smooth in $\mathring{M}$ with a polyhomogeneous expansion at $\pa M$.  Its 
renormalized volume $V(g,\gb)$ is defined as usual by \eqref{sypvolexp}.   
\begin{theorem}\label{higherCGB}
Let $n\geq 3$ be odd and $2k=n+1$.  There is a scalar pointwise conformal  
invariant $J$ of weight $-(n+1)$ and a boundary term $\cB_{\gb}^{v_k}$ so
that if $g=u^{-2}\gb$ is a solution of the singular $v_k$-Yamabe problem
which is smooth in $\mathring{M}$ and polyhomogeneous at $\pa M$, then  
\begin{equation}\label{CGBformula}
c_n\chi(M)=\int_M J_g\,dv_g +\Vt(g),\qquad
c_n=\frac{(-1)^{\frac{n+1}{2}}\pi^{\frac{n+2}{2}}}{\Gamma(\frac{n+2}{2})},
\end{equation}
where
\[
\Vt(g)=V(g,\gb)+\int_{\pa  M}\cB_{\gb}^{v_k}\,dv_{\gb}
\]
is independent of the choice of compactification $\gb$.   
\end{theorem}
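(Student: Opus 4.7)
The plan is to extend the strategy of Theorems~\ref{main} and \ref{main2} to the even dimension $n+1=2k$, relying on three ingredients: a conformal decomposition of the Pfaffian integrand in dimension $2k$, the absence of a log term in the volume expansion guaranteed by Theorem~\ref{Ezero}, and careful cancellation of divergent terms as $\ep\to 0$.

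First, I would apply the Chern-Gauss-Bonnet formula for the smooth metric $\gb$ on $\{r\geq\ep\}$ to write
\[
c_n\chi(M)=\int_{r>\ep}\operatorname{Pf}(R_{\gb})\,dv_{\gb}+\int_{r=\ep}S_{\gb}\,dv_{h_\ep},
\]
where $\operatorname{Pf}(R_{\gb})$ denotes the Pfaffian of the curvature of $\gb$ and $S_{\gb}$ is Chern's boundary transgression. The crucial algebraic input is a pointwise decomposition, valid for any Riemannian metric $g$ in dimension $2k$, of the form
\[
\operatorname{Pf}(R_g)\,dv_g=J_g\,dv_g+\la_n v_k(g)\,dv_g+\operatorname{div}_g(X)\,dv_g,
\]
where $J_g$ is a pointwise conformal invariant of weight $-(n+1)$ polynomial in the Weyl tensor, $\la_n$ is an explicit constant, and $X$ is a vector field polynomial in the jets of $u$ and $\gb$. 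Combined with the conformal transformation law of the Pfaffian in even dimension (which expresses $\operatorname{Pf}(R_{\gb})\,dv_{\gb}-\operatorname{Pf}(R_g)\,dv_g$ as a divergence locally determined by $u$ and $\gb$), this generalizes Lemma~\ref{s2eqn} to higher $k$ and exploits the divergence structure of $v_k$ emphasized in \cite{CF}, which is available precisely because $v_k$, unlike $\sigma_k(g^{-1}P)$ for $k>2$, arises variationally.

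After substituting $v_k(g)=(-2)^{-k}\binom{n+1}{k}$ and applying Stokes' theorem, I obtain
\[
c_n\chi(M)=\int_{r>\ep}J_{\gb}\,dv_{\gb}+\la_n(-2)^{-k}\binom{n+1}{k}\Vol_g(\{r>\ep\})+\int_{r=\ep}S'_{\gb}\,dv_{h_\ep},
\]
where $J_g\,dv_g=J_{\gb}\,dv_{\gb}$ by pointwise conformal invariance and $S'_{\gb}$ collects $S_{\gb}$ together with the boundary contributions from the two divergences. Letting $\ep\to 0$: the first integral converges to $\int_M J_{\gb}\,dv_{\gb}$ since $J_{\gb}$ is smooth on $M$; the volume has expansion \eqref{sypvolexp} whose log coefficient vanishes by Theorem~\ref{Ezero}, so its finite part is $V(g,\gb)$; and the boundary integral is expanded in powers of $\ep$ using \eqref{uexpand} and the geodesic form of $\gb$. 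The divergent powers of $\ep$ from the boundary must cancel against the terms $c_j\ep^{j-n}$ in the volume expansion, exactly as in \eqref{canceldivs} and \eqref{canceldivs2}. After normalizing $J$ so that $\la_n(-2)^{-k}\binom{n+1}{k}=1$, the finite part of the boundary expansion defines $\cB_{\gb}^{v_k}\in C^\infty(\pa M)$, and passing to the limit yields \eqref{CGBformula}.

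The conformal invariance of $\Vt(g)$ then follows immediately from the $\gb$-independence of $\chi(M)$ and of $\int_M J_g\,dv_g$. The main obstacle is the Pfaffian decomposition in step two: constructing $J$ and $X$ explicitly. In the Poincar\'e-Einstein case underlying Chang-Qing-Yang, one uses $P_g\propto g$ to directly identify the Pfaffian with a $\sigma_k(P_g)$ term plus Weyl invariants; in the $v_k$-Yamabe setting one has only $v_k(g)=\text{const}$, and producing the decomposition with its divergence representative demands substantial invariant-theoretic input of the type developed in \cite{BG} and \cite{CF}. Explicit identification of $\cB_{\gb}^{v_k}$ further requires pushing the asymptotic expansions of $u$ and $\gb$ to order $r^{n+1}$, whose combinatorial cost grows rapidly with $k$; this is presumably why the proof is deferred to a future paper.
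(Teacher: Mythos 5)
The paper does not actually contain a proof of Theorem~\ref{higherCGB}: the authors state explicitly that proofs of Theorems~\ref{Ezero}, \ref{var} and \ref{higherCGB} ``will be given elsewhere.'' So there is no in-paper argument to compare yours against; I can only assess your outline on its own terms. Your strategy --- apply Chern--Gauss--Bonnet for $\gb$ on $\{r\geq\ep\}$, decompose the Pfaffian, substitute $v_k(g)=\text{const}$, cancel the divergent powers of $\ep$ against the volume expansion, and read off $\cB_{\gb}^{v_k}$ from the finite part of the boundary integrand --- is the natural generalization of the proofs of Theorems~\ref{main} and \ref{main2} and of the Chang--Qing--Yang argument that the authors cite as motivation, and it is very likely the intended route.

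As a proof, however, it has a genuine gap, which you only partly acknowledge: the entire content of the theorem is concentrated in the asserted pointwise decomposition $\operatorname{Pf}(R_g)\,dv_g=J_g\,dv_g+\la_n v_k(g)\,dv_g+\operatorname{div}(X)\,dv_g$ for \emph{general} metrics in dimension $2k=n+1$, and neither \cite{BG} nor \cite{CF} supplies it. In dimension $4$ it is the elementary identity \eqref{cgb}; in dimension $6$ it can be checked by hand; for general $k$ it is a deep statement. The natural source is the combination of (i) the theorem of \cite{G2} that $\int_M v_k\,dv_g$ is a conformal invariant in dimension $2k$, and (ii) Alexakis' resolution of the Deser--Schwimmer conjecture, which decomposes any local invariant with conformally invariant integral as a nonzero multiple of the Pfaffian plus a pointwise conformal invariant plus a divergence (the nonvanishing of the Pfaffian coefficient being checkable on the round sphere, where $v_k=\sigma_k(g^{-1}P)$ is a nonzero constant); inverting then gives your identity. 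Without an input of this strength the key step is unsupported. Two smaller points: your divergence term $X$ should be a natural Riemannian vector field in $g$ alone, with the dependence on $u$ and $\gb$ entering only afterward through the conformal change, as in Lemma~\ref{s2eqn}; and you do not need Theorem~\ref{Ezero} as an input --- once the boundary integrand is seen to contain no $\log\ep$ term, the vanishing of $\cE^{v_k}$ follows from the cancellation of divergences exactly as in the proof of Theorem~\ref{nologs}, so invoking it as a prerequisite slightly inverts the logic.
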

\noindent
Note that the conformal invariance of $J$ implies that 
$J_gdv_g=J_{\gb}dv_{\gb}$, so that $\int_M J_gdv_g$ converges.   
Proofs of Theorems~\ref{Ezero}, \ref{var}, and \ref{higherCGB} will be
given elsewhere.     

We remark that in dimension 2, constant Gauss curvature metrics play the
role of both Einstein metrics and metrics of constant sectional curvature.
So for $n=1$, every singular Yamabe metric 
should be regarded as Poincar\'e-Einstein.  The quantities discussed here
for singular Yamabe metrics have the same properties when $n=1$ as for
Poincar\'e-Einstein metrics with $n\geq 3$ odd. Namely, 
$\cE$ and $\cL$ both vanish (see \cite{G3}), and the renormalized volume
defined using a geodesic defining function for $g$ is conformally
invariant, i.e. independent of the geodesic defining function.  In this
case, the analogue of Theorem~\ref{higherCGB} is the result (\cite{Ep})
that the renormalized volume defined using a geodesic defining function
equals 
$-2\pi \chi(M)$.  One can also consider the renormalized volume defined
using an arbitrary defining function (corresponding to choosing an
arbitrary compactification $\gb$), in which case the analogue of
\eqref{CGBformula} takes the form
\[
-2\pi \chi(M) = V(g,\gb) +\frac12 \int_{\pa M}H_{\gb}\,ds_{\gb}.   
\]

\end{document}